\documentclass[a4paper, 10pt,leqno]{amsart}
\date{\today}
\usepackage[dvipdfmx]{graphicx}
\usepackage{latexsym}
\usepackage{amssymb}
\usepackage{amsmath}
\usepackage{amscd}
\usepackage{amsthm}
\usepackage{amsfonts}
\usepackage{mathrsfs}
\usepackage{enumerate}
\usepackage{enumitem}
\usepackage{bm}
\usepackage[usenames]{color}
\usepackage[active]{srcltx}
\usepackage{comment}
\usepackage[bbgreekl]{mathbbol}
\usepackage{here}
\usepackage[subrefformat=parens]{subcaption}
  \DeclareSymbolFontAlphabet{\mathbb}{AMSb}
  \DeclareSymbolFontAlphabet{\mathbbl}{bbold}
  \DeclareMathSymbol{\bbepsilon}{\mathord}{bbold}{"0F}
\title{The total absolute curvature of closed curves with singularities}
\author[A.~Honda]{Atsufumi Honda}
\address{%
   Department of Applied Mathematics, 
   Faculty of Engineering, Yokohama National University, 
   Hodogaya, Yokohama 240-8501, Japan
}
\email{honda-atsufumi-kp@ynu.ac.jp}
\author[C.~Tanaka]{Chisa Tanaka}
\address{%
   College of Engineering Science, 
   Yokohama National University, 
   Hodogaya, Yokohama 240-8501, Japan
}
\curraddr{%
   NTT Data Frontier Corporation,
   Konan, Tokyo 108-0075, Japan
}
\email{rgt10tkd@gmail.com}
\author[Y.~Yamauchi]{Yuta Yamauchi}
\address{%
   Graduate School of Engineering Science, 
   Yokohama National University, 
   Hodogaya, Yokohama 240-8501, Japan
}
\email{yamauchi-yuta-hj@ynu.jp}
\thanks{
The first author is supported by 
JSPS KAKENHI Grant Numbers 19K14526, 20H01801 
from Japan Society for the Promotion of Science.}
\subjclass[2020]{%
Primary 53A04; 
Secondary 57R45, 
53C65, 
53C42. 
}
\keywords{%
Fenchel's theorem,
total absolute curvature,
wave front,
non-co-orientability,
singular point%
}
\usepackage{amsthm}
\theoremstyle{plain}
 \newtheorem{theorem}{Theorem}[section]
 \newtheorem{introtheorem}{Theorem}

 \newtheorem{proposition}[theorem]{Proposition}
 \newtheorem{fact}[theorem]{Fact}
 \newtheorem*{fact*}{Fact}
 \newtheorem{lemma}[theorem]{Lemma}
 \newtheorem{corollary}[theorem]{Corollary}
 \theoremstyle{remark}
 \newtheorem{definition}[theorem]{Definition}
 
 \newtheorem*{acknowledgements}{Acknowledgements}
 \newtheorem{example}[theorem]{Example}
\numberwithin{equation}{section}


\pagestyle{plain}
 \makeatletter
    
    \@addtoreset{equation}{section}
  \makeatother


\newcommand{\Z}{\mathbb{Z}}
\newcommand{\R}{\mathbb{R}}

\newcommand{\vect}[1]{\boldsymbol{#1}}

\newcommand{\sgn}{\operatorname{sgn}}

    
    \newcommand{\vt}{{\vect{e}}}


\newcommand{\ga}{\gamma}
\newcommand{\Ga}{\Gamma}

\newcommand{\M}{N}

\newcommand{\Reg}{\operatorname{Reg}}

\begin{document}
\begin{abstract}
In this paper, we give a generalization of 
Fenchel's theorem for closed curves as frontals in Euclidean space $\R^n$.
We prove that, for a non-co-orientable closed frontal in $\R^n$,
its total absolute curvature is greater than or equal to $\pi$.
It is equal to $\pi$ if and only if 
the curve is a planar locally $L$-convex closed frontal
whose rotation index is $1/2$ or $-1/2$.
Furthermore, 
if the equality holds and if every singular point is a cusp,
then the number $\M$ of cusps 
is an odd integer greater than or equal to $3$,
and 
$\M=3$ holds if and only if 
the curve is simple.
\end{abstract}
\maketitle

\section{Introduction}
We fix an integer $n$ greater than $1$.
Let $\gamma(s)$ $(s\in [0,L])$ be an
arclength parametrization of a closed regular curve 
in Euclidean $n$-space $\R^n$,
where $L$ is the length of $\gamma(s)$.
Denote by $k(s)\,(\geq0)$ the curvature function of 
$\gamma(s)$.
Then,
$$
  K(\gamma)=\int_{0}^{L} k(s) \,ds
  \qquad
  \left(k=\left\| \frac{d^2\gamma}{ds^2}\right\| \right)
$$
is called the {\it total absolute curvature}.
The following holds:

\medskip

\noindent
{\bf Fenchel's theorem}
(\cite{Fenchel1929, Fenchel1951, Borsuk, Rutishauser-Samelson}){\bf .}~
{\it 
The total absolute curvature 
of a closed regular curve in $\R^n$ is 
greater than or equal to $2\pi$.
It is equal to $2\pi$ if and only if 
the curve is an oval.}

\medskip

Here, an oval is a locally convex simple closed plane curve.
We remark that 
a locally convex closed plane curve 
is simple if and only if its rotation index is equal to $1$ or $-1$.
%
So far, several generalizations of Fenchel's theorem have been obtained:
knots in $\R^3$ \cite{Fary, Milnor};
curves in a non-positively curved Riemannian manifolds
\cite{Tsukamoto, Brickell-Hsiung};
curves in a sphere \cite{Teufel1986, Teufel1992};
open curves in $\R^n$ \cite{EIS};
curves in the Lorentz-Minkowski space \cite{Borisenko-Tenenblat};
curves in $\operatorname{CAT}(\kappa)$ space \cite{Sama-Ae-Phon-on}.
See also \cite{Arnol'd}.
Furthermore,
Chern-Lashof \cite{Chern-Lashof, Chern-Lashof2}
generalized Fenchel's theorem
to closed submanifolds in $\R^n$.
The lower bound of the total absolute curvature 
is related to topological invariant of the submanifold.
Therefore, Fenchel's theorem is one of 
the main results in global differential geometry.
In this paper, we extend Fenchel's theorem 
for curves with {\it singular points}.
%

\subsection{Statement of results}
\label{sec:results}

For a smooth map $\gamma:I \to \R^n$
defined on a non-empty interval $I$,
a point $c\in I$
is called a {\it regular point\/}
(resp.\ a {\it singular point\/})
of $\gamma$
if the derivative $\gamma'(c)$ does not vanish
(resp.\ does vanish),
where the prime $'$ means $d/dt$.
In this paper, a {\it curve}
is a smooth map 
$$\gamma:I \longrightarrow \R^n$$
whose regular set $\Reg(\gamma)$ is dense in $I$.
A curve $\gamma:I \to \R^n$
is called a {\it frontal}
if there exists a smooth map 
$\vt:I \to S^{n-1}$
such that 
$\ga' (t)$ and $\vt(t)$
are linearly dependent for each $t\in I$,
where $S^{n-1}$ is the unit sphere in $\R^n$.
Such an $\vt(t)$ is said to be a {\it unit tangent vector field\/}
along $\gamma$.
Moreover, 
a frontal $\gamma$ is said to be a {\it wave front\/}
(or a {\it front\/}, for short),
if $L=(\gamma,\vt)$ is a regular curve in 
$\R^n\times S^{n-1}$.

On the regular set $\Reg(\gamma)$
of a frontal $\gamma:I\to \R^n$,
the {\it curvature function} is defined as
$$
k(t)
=\frac{\sqrt{\|\gamma'\|^2\|\gamma''\|^2 - (\gamma'\cdot \gamma'')^2}
}{\|\gamma'\|^3}.
$$
Here,  
we denote by $\|\vect{a}\|$
the norm $\|\vect{a}\|:=\sqrt{\vect{a}\cdot\vect{a}}$ 
of a vector $\vect{a}\in \R^n$,
and $\vect{a} \cdot \vect{b}$
is the canonical Euclidean inner product
of $\vect{a},\vect{b} \in \R^n$.
Since $\vect{e}(t)=\pm \gamma'(t)/\|\gamma'(t)\|$ 
holds on $\Reg(\gamma)$,
the curvature function $k(t)$ is written as
$$
k(t)={\|\vect{e}'(t)\|}/{\|\gamma'(t)\|}.
$$
In general, the curvature function $k$
of a frontal may diverge at a singular point.
However, 
\begin{equation}\label{eq:kappa}
  k\,ds=\|\vect{e}'(t)\|\,dt
\end{equation}
is a bounded continuous $1$-form even at a singular point,
where 
$
 ds=\|\gamma'(t)\|\,dt
$
is the arclength measure.
We call $k\,ds$
the {\it curvature measure}.

If a frontal $\gamma : \R\to \R^n$ is a periodic map,
$\gamma$ is called a {\it closed frontal}.
By multiplying the parameter by a constant, 
we may assume that $\gamma$
is $2\pi$-periodic.
Then, the domain of definition of $\gamma$
is regarded as
$S^1=\R/2\pi \Z$.
For each $a\in \R$,
we represent the $2\pi$-periodic closed curve
$\gamma(t)$
by the restriction $\gamma|_{[a,a+2\pi]}$.

If $\vt$ is also a $2\pi$-periodic map,
that is, if 
$$\vt(t+2\pi) = \vt(t) \qquad (t\in \R),$$
then $\gamma$ is said to be {\it co-orientable}.
Then, the domain of definition of $\vt$ is regarded as $S^1$.
On the other hand, 
if 
$$\vt(t+2\pi) = -\vt(t) \qquad (t\in \R),$$
then $\gamma$ is called {\it non-co-orientable}.
In this case, $\vt$ is a $4\pi$-periodic map,
and hence, 
the domain of definition of $\vt$ is regarded as 
$\widetilde{S}^1=\R/4\pi\Z$.

For a closed frontal $\gamma:S^1\to \R^n$,
as the curvature measure is a continuous $1$-form
on $S^1$,
\begin{equation}\label{eq:K}
  K(\gamma)
  =\int_{S^1} k \,ds
\end{equation}
is a bounded non-negative number.
We call $K(\gamma)$ the {\it total absolute curvature}.

We remark that, in general, 
the total absolute curvature $K(\gamma)$ 
does not possesses a non-trivial lower bound.
In fact, the line segment 
$\gamma(t) =(\cos t,0)$ is a closed frontal
with vanishing total absolute curvature $K(\gamma)=0$.
Similar examples exist even when restricted to wave fronts,
see Example \ref{ex:megata}.

\begin{figure}[htb]
\centering
 \begin{tabular}{c}
\resizebox{5cm}{!}{\includegraphics{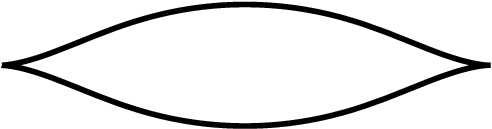}}
\end{tabular}  
\caption{A co-orientable closed front 
having arbitrary small total absolute curvature
(Example \ref{ex:megata}).
}
\label{fig:mgt}
\end{figure}

Hence, in this paper, 
we deal with non-co-orientable closed frontals in $\R^n$.
More precisely,
we prove the following Fenchel-type theorem.

\begin{introtheorem}
\label{thm:intro2}
The total absolute curvature of a non-co-orientable closed frontal 
in $\R^n$ is greater than or equal to $\pi$.
It is equal to $\pi$ if and only if 
the curve is a planar locally $L$-convex frontal
whose rotation index is equal to $1/2$ or $-1/2$.
\end{introtheorem}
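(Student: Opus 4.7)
\smallskip
\noindent\textbf{Proof sketch.}
The plan is to reinterpret the total absolute curvature as the length of a curve on the unit sphere $S^{n-1}$ and to exploit the antipodal symmetry forced by non-co-orientability. Let $\vt:\R\to S^{n-1}$ be a unit tangent vector field along $\gamma$, so that $\vt(t+2\pi)=-\vt(t)$ and $\vt$ has period $4\pi$. Combining these with the identity $k\,ds=\|\vt'(t)\|\,dt$ of \eqref{eq:kappa} and the definition \eqref{eq:K} yields
$$K(\gamma) \;=\; \int_{0}^{2\pi}\|\vt'(t)\|\,dt,$$
which is the arclength of the spherical curve $\vt|_{[0,2\pi]}$ joining the antipodal points $\vt(0)$ and $\vt(2\pi)=-\vt(0)$. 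Since antipodal points of $S^{n-1}$ have spherical distance $\pi$, any such connecting path has length at least $\pi$, proving $K(\gamma)\ge\pi$.

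Suppose now $K(\gamma)=\pi$. Then $\vt|_{[0,2\pi]}$ is a length-minimizing path between $\vt(0)$ and $-\vt(0)$, so its image lies on a great half-circle contained in some two-dimensional linear subspace $\Pi\subset\R^n$. The relation $\vt(t+2\pi)=-\vt(t)$ makes $\vt|_{[2\pi,4\pi]}$ the antipodal image of $\vt|_{[0,2\pi]}$, which lies on the complementary half of the same great circle; hence $\vt(t)\in\Pi$ for every $t$. Since $\gamma'(t)$ is everywhere linearly dependent with $\vt(t)$, we have $\gamma'(t)\in\Pi$, and after a translation $\gamma$ takes values in an affine $2$-plane parallel to $\Pi$. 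Thus $\gamma$ is planar.

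Identify $\Pi$ with $\R^2$ and write $\vt(t)=(\cos\theta(t),\sin\theta(t))$ for a continuous lift $\theta:\R\to\R$; then $\|\vt'(t)\|=|\theta'(t)|$. Non-co-orientability forces $\theta(2\pi)-\theta(0)=(2m+1)\pi$ for some $m\in\Z$, so the rotation index of $\gamma$ equals $(2m+1)/2$. The chain of (in)equalities
$$\pi \;=\; K(\gamma) \;=\; \int_{0}^{2\pi}|\theta'(t)|\,dt \;\ge\; \Bigl|\int_{0}^{2\pi}\theta'(t)\,dt\Bigr| \;=\; |2m+1|\,\pi$$
forces $|2m+1|=1$, so the rotation index is $\pm 1/2$; moreover, equality in the middle step forces $\theta'(t)$ to have constant sign, which is precisely the locally $L$-convex condition. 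Conversely, any planar locally $L$-convex frontal with rotation index $\pm1/2$ satisfies $\int_{0}^{2\pi}|\theta'|\,dt=|\int_{0}^{2\pi}\theta'\,dt|=\pi$, hence $K(\gamma)=\pi$.

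The main obstacle lies in the equality part: one must verify that the paper's definition of \emph{locally $L$-convex frontal} is equivalent to sign-constancy of $\theta'$, and handle with care those intervals on which $\vt'$ (equivalently $\theta'$) vanishes, so that $\vt$ cannot secretly reverse direction through singular points of $\gamma$; one must also confirm that the rotation index of a non-co-orientable planar frontal is unambiguously defined (modulo sign) and coincides with the half-integer $(2m+1)/2$ appearing above.
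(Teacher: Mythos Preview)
Your argument is correct. The planar characterization (via the angle function $\theta$, the triangle inequality $\int|\theta'|\ge|\int\theta'|$, and the identification $\theta'=\tilde\kappa$) is exactly the paper's Proposition~\ref{thm:thm1} and Lemma~\ref{lem:theta-prime}, so there is nothing to add there; the ``obstacles'' you flag are precisely what those two results dispose of.

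Where you genuinely diverge from the paper is in the inequality $K(\gamma)\ge\pi$ and the deduction of planarity in the equality case. The paper doubles the tangent indicatrix to a $4\pi$-periodic closed curve $\tilde\vt:\tilde S^1\to S^{n-1}$, shows (Lemma~\ref{lem:inter}) that non-co-orientability forces $\tilde\vt$ to meet every great hypersphere, and then invokes the Rutishauser--Samelson lemma (Fact~\ref{fact:sphere}) to get $\mathcal L(\tilde\vt)\ge 2\pi$ and hence $K(\gamma)\ge\pi$; the equality clause of that lemma then pins the image to a single great circle. You instead observe directly that $\vt|_{[0,2\pi]}$ joins antipodal points of $S^{n-1}$, hence has length at least the intrinsic distance $\pi$, and that length $\pi$ forces the image onto a single half great circle. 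Your route is more elementary---it avoids the Rutishauser--Samelson lemma entirely---while the paper's route stays closer to the classical Fenchel proof and makes the parallel with it explicit. The one point in your version that deserves a sentence of justification is that a spherical curve of length exactly $\pi$ between antipodes must trace a single half great circle (rather than pieces of several meridians); this follows from the equality case of the triangle inequality together with a unit-speed reparametrization, and is standard, but is not quite as immediate as ``length-minimizing implies geodesic'' because antipodes are joined by a whole family of minimizing geodesics.
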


Here, the definitions of the local $L$-convexity
and the rotation index are given in Section \ref{sec:thm-A}.
In the case of regular curves,
Fenchel's theorem says that 
every closed regular curve having minimum total absolute curvature
$2\pi$
must be a simple closed curve.
However, in the case of frontals,
there exist non-simple examples 
having minimum total absolute curvature
$K(\gamma)=\pi$, see Figure \ref{fig:curve}.

\begin{figure}[htb]
\centering
 \begin{tabular}{c@{\hspace{4mm}}c@{\hspace{4mm}}c@{\hspace{4mm}}c}
  \resizebox{3cm}{!}{\includegraphics{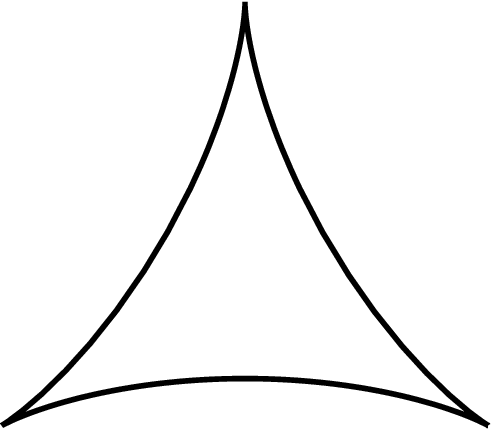}} &
  \resizebox{3cm}{!}{\includegraphics{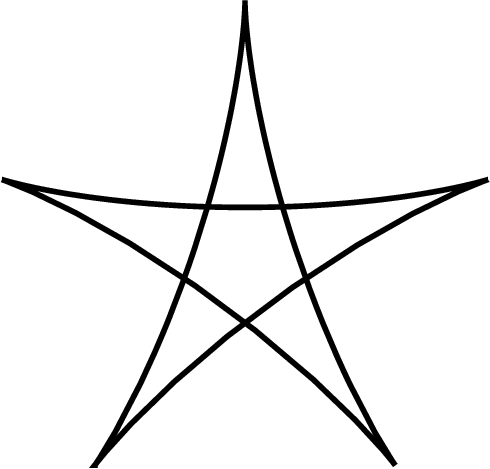}} & 
  \resizebox{3cm}{!}{\includegraphics{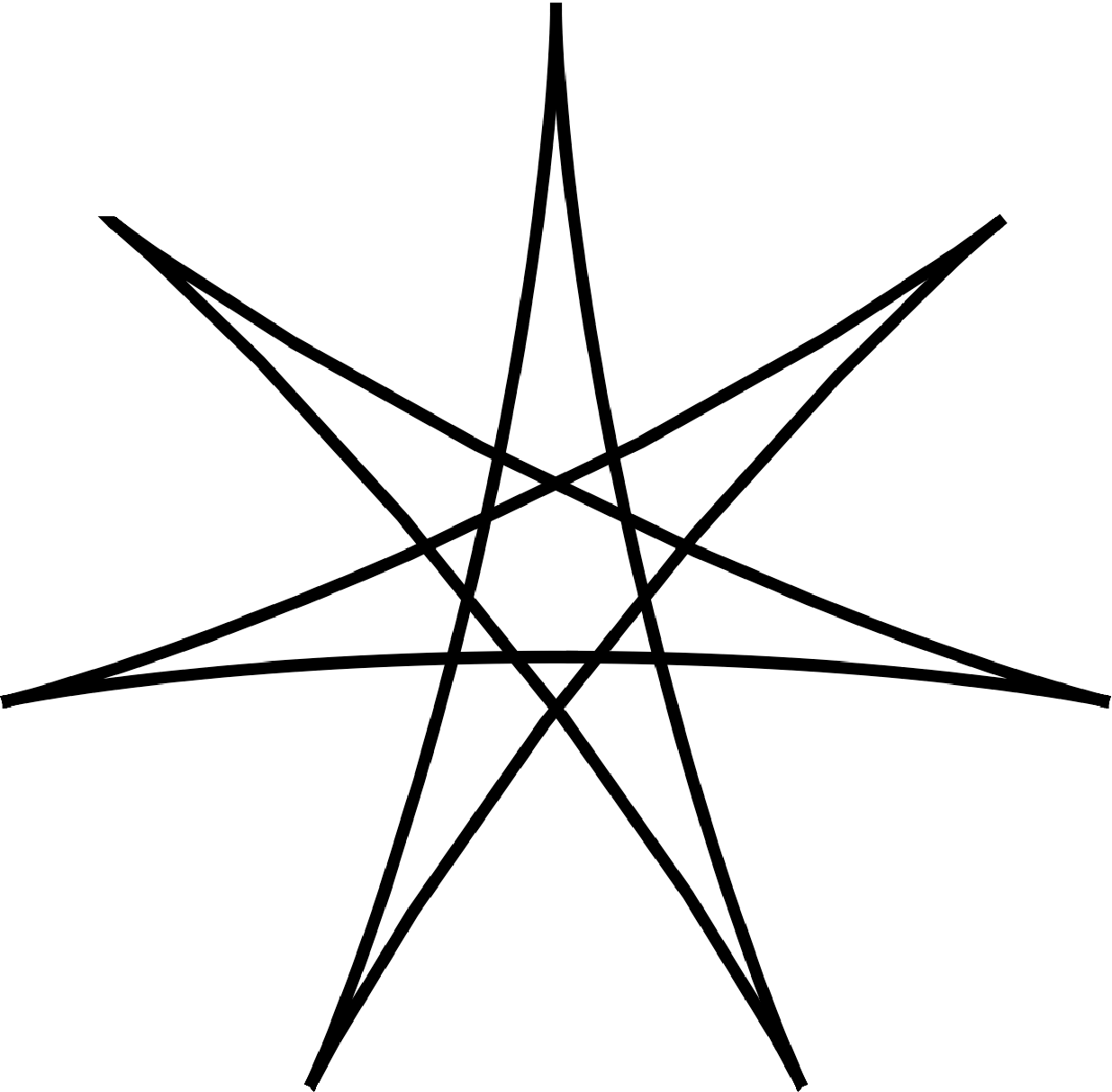}} \\
  {\footnotesize (a)} &
  {\footnotesize (b)} &
  {\footnotesize (c)} 
 \end{tabular}  
 \caption{
 Closed non-co-orientable fronts
 with total absolute curvature $K(\gamma)=\pi$ in $\R^2$.
 All curves (a), (b) and (c) are given by hypocycloids 
 (Example \ref{ex:hypo}).
 Although the leftmost (a) is a simple closed curve,
 the others, (b) and (c), are not simple. 
 All the singular points of these curves are cusp. 
 }
 \label{fig:curve}
\end{figure} 

Thus, a natural question to consider 
is when a non-co-orientable closed frontal 
with minimum total absolute curvature 
becomes a simple closed curve.
In this paper, 
we solve this question 
when the singularities are all {\it cusps}.
Here, for a smooth map 
$\gamma : I \to \R^2$,
a singular point $c\in I$ of $\gamma$ is called {\it cusp}
if the map-germ $\gamma : (I,c) \to (\R^2,\gamma(c))$
is right-left equivalent to the map-germ $t\mapsto (t^2,t^3)$.
It is known that the cusp singularity
is stable among the singularities of plane curves.
We have the following. 

\begin{introtheorem}
\label{thm:intro3}
Let $\gamma : S^1\to \R^2$ be a
non-co-orientable closed front
with minimum total absolute curvature $K(\gamma)=\pi$.
Suppose that every singular point is a cusp.
Then the number $\M$ of cusps is 
an odd integer 
satisfying $\M\geq3$.
Moreover, 
$\M=3$ if and only if $\gamma$ is simple.
\end{introtheorem}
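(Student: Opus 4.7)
By Theorem B, the hypothesis $K(\gamma)=\pi$ forces $\gamma$ to be a planar, locally $L$-convex front of rotation index $\pm 1/2$; after orientation reversal if needed, I take this to be $+1/2$. The plan is to parametrize $\gamma$ by the angle $\theta$ of the unit tangent $\vt=(\cos\theta,\sin\theta)$, which local $L$-convexity makes strictly monotonic in $t$ with $\theta(t+2\pi)=\theta(t)+\pi$. Thus one period of $\gamma$ corresponds to $\theta\in[0,\pi]$ with $\gamma(\pi)=\gamma(0)$, and one has the support-function representation
\[
\gamma(\theta)=h(\theta)\,\vt^{\perp}(\theta)-h'(\theta)\,\vt(\theta),
\qquad
\gamma'(\theta)=-\bigl(h(\theta)+h''(\theta)\bigr)\vt(\theta).
\]
Non-co-orientability $\vt(t+2\pi)=-\vt(t)$ combined with the $2\pi$-periodicity of $\gamma$ yields $h(\theta+\pi)=-h(\theta)$, so the Fourier series of $h$ involves only odd harmonics, and cusps correspond exactly to zeros of $\mu:=h+h''$ in $[0,\pi)$.

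The parity and lower bound on $\M$ then follow from the Sturm--Hurwitz theorem. Since the $k$-th Fourier harmonic of $h$ contributes $(1-k^{2})$ times itself to $\mu$, the $k=1$ mode is annihilated and the Fourier expansion of $\mu$ is supported in odd frequencies $k\geq 3$. Sturm--Hurwitz then gives at least $2\cdot 3=6$ sign changes of $\mu$ on $[0,2\pi)$, which the antisymmetry $\mu(\theta+\pi)=-\mu(\theta)$ halves to $\geq 3$ on $[0,\pi)$. Because $\mu(0)$ and $\mu(\pi)=-\mu(0)$ have opposite signs, the number of sign changes on $(0,\pi)$ is forced to be odd; and since every singular point is assumed to be a cusp, every zero of $\mu$ is a simple sign change counting exactly one cusp. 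Hence $\M$ is odd and $\M\geq 3$.

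The equivalence $\M=3\Longleftrightarrow\gamma$ is simple is the heart of the theorem. For the direction $\M=3\Rightarrow$ simple, when $\mu$ has exactly three sign changes on $[0,\pi)$ the curve decomposes into three convex arcs joined at three cusps, and a direct geometric analysis of the support function---essentially showing that the configuration is governed by the dominant Fourier mode $A\cos(3\theta-\phi_{0})$ and that higher odd harmonics can only perturb this without creating crossings---shows these three arcs form a deltoid-type simple closed curve. For the converse, applying Gauss--Bonnet to the bounded region enclosed by a simple $\gamma$ yields an algebraic relation between the number of outward and inward cusps (the regular arcs contribute $\int\kappa_{g}\,ds$ equal to the total $\vt$-rotation, and each cusp contributes exterior angle $\pm\pi$), which by itself is consistent with any odd $\M\geq 3$.

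The main obstacle is precisely to upgrade this algebraic data to the sharp conclusion $\M=3$ in the simple case. My plan is to carry this out by a winding-number computation inspired by the model hypocycloids $h(\theta)=\cos(k\theta)$: a direct calculation shows that the winding number of the resulting $\gamma$ about the origin equals $(1-k)/2$, and hence equals $\pm 1$ only for $k=3$. Extending this identity to arbitrary locally $L$-convex fronts with $\M$ cusps---for example through a Fabricius--Bjerre-type intersection-counting argument for inflection-free fronts, or by a continuity/deformation argument from the hypocycloid model---will rule out $\M\geq 5$ under the simplicity hypothesis and close the proof.
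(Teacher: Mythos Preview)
Your Sturm--Hurwitz approach to parity and the bound $N\geq 3$ is attractive, but it rests on reparametrizing by $\theta$, which requires $\theta'=\tilde\kappa>0$. Local $L$-convexity only gives $\tilde\kappa\geq 0$, and nothing in the hypotheses rules out $\tilde\kappa=0$ on an interval of regular points, so $h(\theta)$ need not be single-valued and the Fourier argument is not available as stated. The paper obtains these two facts much more cheaply: parity follows because the sign function $\epsilon=T\cdot\vt$ flips at each cusp (Lemma~\ref{lem:sgn-change}) and, by non-co-orientability, must flip an odd number of times over a period; and $N\neq 1$ because a closed arc with no interior singular point already has $K>\pi$ (Fact~\ref{fact:sing-0}).

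The serious gap is the implication $N=3\Rightarrow\gamma$ simple, which is the heart of the theorem. Your ``direct geometric analysis'' is not a proof: the assertion that higher odd Fourier modes cannot create self-intersections is precisely what must be shown, and it is certainly false without exploiting the constraint $K(\gamma)=\pi$. In the paper this is Theorem~\ref{thm:main}, proved by contradiction: assuming $\gamma(a)=\gamma(b)$, one cuts $\gamma$ at $a,b$ into two arcs, each containing one or two cusps, and proves the sharp estimates $K(\gamma_1)>\varphi$ and $K(\gamma_2)\geq\pi-\varphi$ in terms of the endpoint angle $\varphi$ (Propositions~\ref{prop:cusp-1} and~\ref{prop:cusp-2}), which together force $K(\gamma)>\pi$. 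That case analysis does the real work, and nothing in your outline replaces it.

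For the converse you abandon Gauss--Bonnet prematurely. The paper applies it to the interior domain $\Omega$ and obtains $0=(N-2)\pi+\sigma\pi$ with $\sigma\in\{+,-\}$, hence $N\in\{1,3\}$, and $N=1$ is already excluded. The step you are doubting is that every cusp contributes interior angle $0$ rather than an uncontrolled $\pm\pi$ exterior angle; this is where your ``outward/inward'' worry enters, and it is exactly what you should try to justify (via $L$-convexity and the orientation convention) instead of pivoting to a winding-number or Fabricius--Bjerre scheme that you only sketch and never carry out.
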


On the other hand,
for a given convex closed curve,
its focal set (i.e., the image of the caustic)
admits singular points.
Gounai-Umehara \cite{Gounai-Umehara}
classified the diffeomorphic type of focal sets 
of convex curves which admit at most four cusps
under a generic assumption.
Such the caustics are 
wave fronts having no inflection points.
Theorem \ref{thm:intro3} can be regarded as 
a generalization of Gounai-Umehara's classification theorem
in the case that the focal sets have exactly three singular points.

\subsection{Organization of the paper}
\label{sec:organization}

This paper is organized as follows.
We prove Theorem \ref{thm:intro2} in Section \ref{sec:thm-A},
and Theorem \ref{thm:intro3} in Section \ref{sec:thm-B}.
In Section \ref{sec:examples}, we exhibit some examples. 
In particular, 
we show an example of a co-orientable closed wave front 
with arbitrarily small total absolute curvature
(Example \ref{ex:megata}),
and 
non-co-orientable closed hypocycloids
with total absolute curvature $\pi$ 
(Example \ref{ex:hypo}),
as shown in Figure \ref{fig:curve}.

\section{Proof of Theorem \ref{thm:intro2}}
\label{sec:thm-A}

In this section, we give a proof of Theorem \ref{thm:intro2}.
First, we prove that 
for a non-co-orientable closed frontal 
$\gamma : S^1\to \R^n$,
the total absolute curvature $K(\gamma)$
satisfies $K(\gamma)\geq \pi$,
and that $K(\gamma)= \pi$ implies $\gamma(t)$ must be planar
(Proposition \ref{prop:A-1}).
Next, 
after introducing the definitions of the local $L$-convexity
and the rotation index,
we prove that 
a non-co-orientable closed frontal 
$\gamma : S^1\to \R^2$
with total absolute curvature $K(\gamma)=\pi$
must be locally $L$-convex and 
the rotation index is $\pm1/2$,
and vice versa
(Proposition \ref{thm:thm1}).

\subsection{Length of tangent indicatrix}
By \eqref{eq:kappa} and \eqref{eq:K}, we have the following.

\begin{lemma}\label{lem:t-length}
For a closed frontal $\ga: [0,2\pi] \to \R^n$, 
the length
$$\mathcal{L}(\vt) = \int_0^{2\pi} \left\| \vt'(t) \right\| dt$$
of $\vt : [0,2\pi] \to S^{n-1}$ as a spherical curve in $S^{n-1}$ 
coincides with the total absolute curvature $K(\gamma)$ 
of the frontal $\gamma$.
\end{lemma}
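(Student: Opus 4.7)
The plan is to derive the identity directly from equation \eqref{eq:kappa}, using the fact (noted in the text) that the curvature measure $k\,ds$ extends to a bounded continuous $1$-form on $S^1$ even across singular points.

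First, I would verify \eqref{eq:kappa} pointwise on the regular set $\Reg(\gamma)$. There we have $\vt(t) = \pm \gamma'(t)/\|\gamma'(t)\|$, and the displayed formula $k(t) = \|\vt'(t)\|/\|\gamma'(t)\|$ together with $ds = \|\gamma'(t)\|\,dt$ immediately yields
$$k\,ds = \|\vt'(t)\|\,dt.$$
Since both sides extend continuously to all of $[0,2\pi]$ (the right-hand side because $\vt$ is smooth, the left-hand side by the continuity of $k\,ds$ noted in the text), the identity of $1$-forms holds on the entire interval.

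Next, I would integrate both sides over $[0,2\pi]$:
$$K(\gamma) = \int_{S^1} k\,ds = \int_0^{2\pi} \|\vt'(t)\|\,dt = \mathcal{L}(\vt),$$
which is exactly the claim.

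There is no real obstacle here; the lemma amounts to a repackaging of \eqref{eq:kappa} together with the definition of $\mathcal{L}(\vt)$. The only subtlety worth noting is that in the non-co-orientable case $\vt$ is $4\pi$-periodic rather than $2\pi$-periodic, but the lemma is stated using the restriction $\vt|_{[0,2\pi]}$, which is smooth, so $\|\vt'(t)\|\,dt$ is an honest continuous $1$-form on $[0,2\pi]$ and the computation goes through uniformly in both the co-orientable and non-co-orientable settings.
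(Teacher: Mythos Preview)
Your proof is correct and is precisely the argument the paper has in mind: the lemma is stated immediately after ``By \eqref{eq:kappa} and \eqref{eq:K}, we have the following,'' with no further proof given. You have simply spelled out what that one-line justification means.
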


The non-co-orientability yields the following.

\begin{lemma}\label{lem:inter}
Let $\ga : [0,2\pi] \to \R^n$ be a non-co-orientable closed frontal
with a unit tangent vector field $\vt:[0,2\pi] \to S^{n-1}$.
Then, every great hypersphere of $S^{n-1}$ intersects 
the image $\vt([0,2\pi])$.
\end{lemma}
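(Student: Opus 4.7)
The plan is to use the Intermediate Value Theorem applied to the inner product of $\vt(t)$ with a fixed normal vector to the given great hypersphere.

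First, I would recall that any great hypersphere of $S^{n-1}$ can be written in the form
\[
  H_w = \{v \in S^{n-1} : v \cdot w = 0\}
\]
for some unit vector $w \in S^{n-1}$. So the statement reduces to the following: for every $w \in S^{n-1}$, there exists $t \in [0,2\pi]$ with $\vt(t) \cdot w = 0$.

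Next, I would introduce the continuous function $f : [0,2\pi] \to \R$ defined by $f(t) = \vt(t) \cdot w$. The key observation is the endpoint relation: since $\gamma$ is non-co-orientable, we have $\vt(t + 2\pi) = -\vt(t)$ for all $t \in \R$, and evaluating at $t = 0$ yields $\vt(2\pi) = -\vt(0)$. Therefore
\[
  f(2\pi) = \vt(2\pi) \cdot w = -\vt(0) \cdot w = -f(0).
\]

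Finally, I would conclude by the Intermediate Value Theorem: if $f(0) = 0$, then $\vt(0) \in H_w$ and we are done; otherwise $f(0)$ and $f(2\pi)$ have opposite signs, so by continuity of $f$ there exists some $t_0 \in (0, 2\pi)$ with $f(t_0) = 0$, that is, $\vt(t_0) \in H_w$. This forces $\vt([0,2\pi]) \cap H_w \neq \emptyset$, as required. There is no substantive obstacle here; the entire content of the lemma is the sign-reversal forced by non-co-orientability, which converts the problem into a one-line application of the IVT.
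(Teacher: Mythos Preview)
Your proof is correct and matches the paper's argument almost verbatim: both represent the great hypersphere as the zero set of a linear functional, define $f(t)=\vt(t)\cdot w$, use non-co-orientability to obtain $f(2\pi)=-f(0)$, and apply the Intermediate Value Theorem.
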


\begin{proof}
Let $G$ be a great hypersphere of $S^{n-1}$.
Then there exists a vector $\xi \in S^{n-1}$
such that 
$
G = \{ \vect{x} \in S^{n-1} \,;\, \vect{x}\cdot \xi = 0 \}.
$
Hence, 
it suffices to prove that 
there exists $t_0 \in [0,2 \pi]$ such that
$\vt(t_0) \cdot  \xi = 0$.
We set $f : [0,2 \pi] \to \R$ by
$
f(t) = \vt(t) \cdot \xi.
$
Since $\ga$ is non-co-orientable,
$\vt(0) = - \vt(2 \pi)$ holds
and hence,
we have
$
f(0) = - f(2 \pi).
$
If $f(0)=0$, 
then the claim is true, 
so let us assume that $f(0) \neq 0$.
Since $f(0)$ and $f(2 \pi)$ have the opposite sign,
the intermediate value theorem yields that
there exists $t_0 \in [0,2 \pi]$ such that $f(t_0) = 0$,
which proves the assertion.
\end{proof}

We use the following fact called the {\it Rutishauser-Samelson lemma}
\cite{Rutishauser-Samelson}.
This lemma is also introduced in an expository article 
written by S.S.~Chern \cite{Chern}.

\begin{fact}[\cite{Rutishauser-Samelson}, see also \cite{Chern}]
\label{fact:sphere}
Let $\vect{c} : S^1 \to S^{n-1}$ be a $C^1$ closed curve 
of length $\mathcal{L}(\vect{c})$.
We denote by $\Ga$ the image $\vect{c}(S^1)$.
Suppose that, 
for any great hypersphere $G$ of $S^{n-1}$,
the intersection $\Ga \cap G$ is non-empty. 
Then $\mathcal{L}(\vect{c}) \geq 2 \pi$ holds.
Moreover, if $\mathcal{L}(\vect{c}) = 2 \pi$,
then $\Ga$ consists of two half-arcs of great circles.
\end{fact}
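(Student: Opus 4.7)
The plan is to prove the length bound $\mathcal{L}(\vect{c}) \geq 2\pi$ by contradiction, using the hypothesis to produce a distinguished point on $\vect{c}$ whose spherical distances to two chosen reference points sum exactly to $\pi$, and then comparing each of these distances with the arc length of the corresponding sub-arc of $\vect{c}$.

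I would first arclength-parameterize $\vect{c}:[0,L]\to S^{n-1}$, where $L=\mathcal{L}(\vect{c})$, and suppose for contradiction that $L<2\pi$. Set $P=\vect{c}(0)$ and $Q=\vect{c}(L/2)$, so that the two sub-arcs of $\vect{c}$ joining $P$ to $Q$ both have length $L/2<\pi$. Then the spherical distance satisfies $d(P,Q)\leq L/2<\pi$, and in particular $P+Q\neq\vect{0}$. Applying the hypothesis to the great hypersphere
\begin{equation*}
G=\{\vect{x}\in S^{n-1}\,;\,\vect{x}\cdot(P+Q)=0\}
\end{equation*}
produces $s^{*}\in[0,L]$ with $R:=\vect{c}(s^{*})\in G$. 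Since $R\cdot P=\cos d(R,P)$ and $R\cdot Q=\cos d(R,Q)$ with both distances in $[0,\pi]$, the identity $R\cdot P+R\cdot Q=0$ forces $d(R,P)+d(R,Q)=\pi$.

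Assuming without loss of generality that $s^{*}\in[0,L/2]$, the sub-arcs of $\vect{c}$ from $P$ to $R$ and from $R$ to $Q$ have lengths $s^{*}$ and $L/2-s^{*}$, so the inequality ``arc length $\geq$ spherical distance'' yields $d(R,P)\leq s^{*}$ and $d(R,Q)\leq L/2-s^{*}$. Summing these gives $\pi\leq L/2$, contradicting $L<2\pi$. For the equality case $\mathcal{L}(\vect{c})=2\pi$, the same argument, now applied to every starting parameter $s_0$ with $P_{s_0}=\vect{c}(s_0)$ and $Q_{s_0}=\vect{c}(s_0+\pi)$, must yield equalities throughout. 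Each sub-arc of length at most $\pi$ that realises its spherical distance as its arc length must be a minor great-circle arc; tracking the auxiliary point $R_{s_0}$ as $s_0$ varies, together with handling the degenerate instances where $P_{s_0}$ and $Q_{s_0}$ are antipodal, should then force $\Ga$ to be a union of two half-great-circle arcs joined at an antipodal pair.

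The main obstacle will be the equality case. While the length bound reduces cleanly to spherical triangle estimates combined with the defining property of $R$, extracting the global conclusion about $\Ga$ from the pointwise equalities requires a continuity analysis of $R_{s_0}$ as a function of $s_0$ (including treatment of non-uniqueness and of the singular values of $s_0$ where $P_{s_0}+Q_{s_0}=\vect{0}$), which is substantially more delicate than the inequality portion.
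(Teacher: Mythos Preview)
The paper does not actually prove Fact~\ref{fact:sphere}; it is stated as a cited result (the Rutishauser--Samelson lemma) and used as a black box in the proof of Proposition~\ref{prop:A-1}. So there is no ``paper's proof'' to compare against.

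That said, your argument for the inequality $\mathcal{L}(\vect{c})\geq 2\pi$ is correct and is in fact the classical proof one finds in Chern's exposition: bisect the curve into two arcs of length $L/2$, note that the endpoints $P,Q$ are non-antipodal when $L<2\pi$, intersect the curve with the hyperplane equidistant from $P$ and $Q$, and compare spherical distances with arc lengths. Each step is clean and the contradiction is immediate.

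Your treatment of the equality case, however, is only a sketch, and you correctly identify that it is the delicate part. The outline---varying the base point $s_0$ and arguing that every sub-arc must realize its endpoints' spherical distance as its length---is along the right lines, but as written it is not a proof: the map $s_0\mapsto R_{s_0}$ need not be single-valued or continuous, and at parameters where $P_{s_0}+Q_{s_0}=\vect{0}$ the construction breaks down entirely. A cleaner route for equality is to fix one bisection $P,Q$ and observe that the argument forces \emph{each} half-arc to be a length-minimizing curve from $P$ to $Q$ through the corresponding $R$, hence a great-circle arc (possibly broken at $R$); checking that $d(P,Q)=\pi$ is forced then gives the two half-great-circles directly. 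Since the paper only cites the result, whether you flesh this out or simply cite Rutishauser--Samelson is a matter of taste.
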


Now we have the following.

\begin{proposition}\label{prop:A-1}
Let $\ga : S^1 \to \R^n$ be a non-co-orientable closed frontal.
Then, $K(\ga) \geq \pi$ holds.
Moreover, if the equality holds,
then $\ga$ is planar.
\end{proposition}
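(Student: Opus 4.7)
The plan is to apply the Rutishauser--Samelson lemma (Fact~\ref{fact:sphere}) to the tangent indicatrix $\vt$, viewed on its full period $4\pi$. Since $\ga$ is non-co-orientable, $\vt$ is $4\pi$-periodic, and differentiating $\vt(t+2\pi)=-\vt(t)$ yields $\|\vt'(t+2\pi)\|=\|\vt'(t)\|$. Hence, by Lemma~\ref{lem:t-length},
\begin{equation*}
  \int_0^{4\pi}\|\vt'(t)\|\,dt \;=\; 2\int_0^{2\pi}\|\vt'(t)\|\,dt \;=\; 2K(\ga).
\end{equation*}
Lemma~\ref{lem:inter} already shows that every great hypersphere of $S^{n-1}$ meets $\vt([0,2\pi])\subset\vt([0,4\pi])$, so applying Fact~\ref{fact:sphere} to the closed $C^1$ curve $\vt\colon \R/4\pi\Z\to S^{n-1}$ yields $2K(\ga)\geq 2\pi$, i.e., $K(\ga)\geq \pi$.

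Now suppose that $K(\ga)=\pi$. The equality case of Fact~\ref{fact:sphere} asserts that the image $\Ga:=\vt(\R/4\pi\Z)$ consists of two half-arcs $\be_1,\be_2$ of great circles of $S^{n-1}$, each of length $\pi$ and joining a pair of antipodal points. On the other hand, setting $A:=\vt([0,2\pi])$, the relation $\vt(t+2\pi)=-\vt(t)$ gives $\Ga=A\cup(-A)$, so $\Ga$ is invariant under the antipodal map $\vect{x}\mapsto -\vect{x}$. A proper half-arc of a great circle is never antipodally symmetric, since the antipodal map sends such a half-arc from $p$ to $-p$ to the complementary half of the same great circle. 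Therefore the antipodal involution must swap the two arcs, i.e., $\be_2=-\be_1$, and $\Ga=\be_1\cup(-\be_1)$ is a full great circle lying in the $2$-plane $\Pi\subset\R^n$ that contains $\be_1$. Since $\ga'(t)$ is proportional to $\vt(t)\in \Pi$ for every $t$, we get $\ga(t)\in \ga(0)+\Pi$, so $\ga$ is planar.

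The principal obstacle is the equality analysis: one must invoke the non-co-orientability a second time, beyond its use in verifying the hypothesis of Fact~\ref{fact:sphere}, to rule out the a priori possibility that the two great-circle arcs provided by Rutishauser--Samelson lie in distinct $2$-planes. The length computation and the length estimate itself are then essentially routine consequences of the periodicity of $\|\vt'\|$.
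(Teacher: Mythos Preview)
Your proof is correct and follows the same strategy as the paper: pass to the $4\pi$-periodic tangent indicatrix, apply the Rutishauser--Samelson lemma (Fact~\ref{fact:sphere}) together with Lemma~\ref{lem:inter}, and read off planarity from the equality case. Your equality analysis is in fact more explicit than the paper's---the paper simply asserts that $\vt([0,2\pi])$ is a half-arc of a great circle, whereas you use the antipodal symmetry $\Ga=-\Ga$ to rule out the possibility that the two half-arcs lie on distinct great circles.
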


\begin{proof}
Let $\tilde{\ga}$ and $\tilde{\vt}$ be the extensions of $\ga$ and $\vt$
so that their domains of definition are $[0, 4 \pi]$.
Then, the domain of definition of $\tilde{\vt}$
is regarded as $\tilde{S}^1=\R/4\pi\Z$,
and the image $\tilde{\Ga}=\tilde{\vt}(\tilde{S}^1)$ defines a closed curve in $S^{n-1}$.
From Lemma \ref{lem:inter}, $\tilde{\Ga}$ intersects with any great hypersphere of $S^{n-1}$.
Therefore, Fact \ref{fact:sphere} yields
$
K(\tilde{\ga}) \geq 2 \pi.
$
Since $K(\tilde{\ga}) = 2 K(\ga)$,
we have $K(\ga) \geq \pi.$
The equality holds if and only if $\tilde{\Ga}$ consists of two half-arcs 
of great circles, that is, $\Ga=\vt([0,2\pi])$ is a half-arc of a great circle.
Therefore, the image of $\ga(t)$ is a subset of 
a $2$-dimensional subspace
of $\R^n$,
which proves the assertion.
\end{proof}

\subsection{Local $L$-convexity of planar frontals}
On the regular set $\Reg(\gamma)$
of a frontal $\gamma:I\to \R^2$,
we set
$$
\kappa(t)
=\frac{1}{\|\gamma'\|^3}\det(\gamma',\gamma'').
$$
To distinguish between $\kappa(t)$ and $k(t)$,
we call $\kappa(t)$ the {\it oriented curvature function} of $\gamma(t)$.
Since $\vect{e}(t)=\pm \gamma'(t)/\|\gamma'(t)\|$ 
holds on $\Reg(\gamma)$,
the oriented curvature function $\kappa(t)$ is written as
$
\kappa(t)=\tilde{\kappa}(t)/\|\gamma'(t)\|,
$
where 
we set
\begin{equation}\label{eq:tilde-kappa}
  \tilde{\kappa}(t) = \det(\vt(t),\vt'(t))\qquad
  (\vt'=d\vt/dt).
\end{equation}
Letting 
$
 ds=\|\gamma'(t)\|\,dt
$
is the arclength measure of $\gamma$,
we have that 
\begin{equation}\label{eq:ori-kappa}
  \kappa\,ds=\tilde{\kappa}\,dt
\end{equation}
is a bounded smooth $1$-form even at a singular point.
We call $\kappa\,ds$
the {\it oriented curvature measure}.

\begin{definition}
A frontal $\gamma:I\to \R^2$ is called 
{\it locally $L$-convex}, if either
$\tilde{\kappa}\geq0$ or $\tilde{\kappa}\leq0$ 
holds on $I$,
where $\tilde{\kappa}$ is the function defined by 
\eqref{eq:tilde-kappa}.
\end{definition}

Since the oriented curvature function $\kappa$ is written as
$\kappa=\tilde{\kappa}/\|\gamma'\|$ on the regular set $\Reg(\gamma)$,
the local $L$-convexity condition is a generalization 
of the local convexity of regular curves.
We remark that 
Li-Wang \cite{Li-Wang}
defined {\it $\ell$-convex curves},
cf.\ \cite{Fukunaga-Takahahi}.
An $\ell$-convex curve is a closed wave front in $\R^2$
such that $\tilde{\kappa}$ has no zeros,
and hence, $\ell$-convex curves are locally $L$-convex.

\medskip

Let $\gamma : S^1\to \R^2$ 
be a non-co-orientable closed frontal
with a unit tangent vector field  $\vect{e} : [0,2\pi]\to S^1$.
We set a smooth function $\theta(t)$ as
\begin{equation}\label{eq:theta}
\vt(t)=(\cos \theta(t), \sin \theta(t)).
\end{equation}
We call $\theta(t)$ the {\it angle function}.
Since $\vect{e} : [0,2\pi]\to S^1$ satisfies
$\vect{e}(2\pi)=-\vect{e}(0)$,
there exists an integer $m\in \Z$
such that $\theta(2\pi)-\theta(0)=(2m+1)\pi$ holds.
We call the half-integer
$$
  \operatorname{ind}_{\gamma}= m+\frac1{2}
$$
the {\it rotation index} of $\gamma(t)$.
We remark that $2\pi \operatorname{ind}_{\gamma}=\theta(2\pi)-\theta(0)$.

\begin{lemma}\label{lem:theta-prime}
For a frontal $\gamma : I \to \R^2$,
we set a smooth function $\theta(t)$ as \eqref{eq:theta}.
Then $\theta'(t)=\tilde{\kappa}(t)$ holds.
\end{lemma}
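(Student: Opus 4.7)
The plan is to prove this by a direct computation, simply differentiating the expression \eqref{eq:theta} and plugging into the definition \eqref{eq:tilde-kappa} of $\tilde{\kappa}$. No global arguments are required; this is a pointwise identity.

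First I would differentiate $\vt(t)=(\cos\theta(t),\sin\theta(t))$ with respect to $t$ via the chain rule, obtaining
\[
\vt'(t)=\theta'(t)\bigl(-\sin\theta(t),\,\cos\theta(t)\bigr).
\]
Next, I would substitute $\vt$ and $\vt'$ into the $2\times 2$ determinant appearing in \eqref{eq:tilde-kappa}:
\[
\tilde{\kappa}(t)=\det(\vt(t),\vt'(t))
=\det\!\begin{pmatrix}\cos\theta(t) & -\theta'(t)\sin\theta(t)\\[2pt] \sin\theta(t) & \theta'(t)\cos\theta(t)\end{pmatrix}.
\]
Expanding the determinant and pulling out the common factor $\theta'(t)$ from the second column gives $\theta'(t)\bigl(\cos^{2}\theta(t)+\sin^{2}\theta(t)\bigr)$, which collapses to $\theta'(t)$ by the Pythagorean identity. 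This yields the asserted equality $\theta'(t)=\tilde{\kappa}(t)$.

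There is essentially no obstacle here, since $\theta$ is smooth (it is constructed as a smooth lift of a smooth map into $S^1$) and the computation is algebraic. The only thing to remark on, were one inclined to spell it out, is that the existence of a smooth angle function $\theta$ satisfying \eqref{eq:theta} is guaranteed by the standard covering-space lift $\R\to S^1$ applied to the smooth map $\vt$, so $\theta'$ exists and the above chain-rule step is legitimate.
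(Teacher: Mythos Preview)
Your proof is correct and follows exactly the same approach as the paper: the paper's proof simply says ``Substituting \eqref{eq:theta} into \eqref{eq:tilde-kappa}, we obtain the desired result,'' which is precisely the direct computation you have carried out in detail.
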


\begin{proof}
Substituting \eqref{eq:theta} into
\eqref{eq:tilde-kappa},
we obtain the desired result.
\end{proof}

\begin{proposition}\label{thm:thm1}
Let $\gamma : S^1 \to \R^2$ be a non-co-orientable closed frontal.
Then, the total absolute curvature $K(\gamma)$
is greater than or equal to $\pi$.
It is equal to $\pi$ if and only if 
$\gamma$ is an $L$-convex frontal 
whose rotation index is equal to $1/2$ or $-1/2$.
\end{proposition}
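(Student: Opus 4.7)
The plan is to express $K(\gamma)$ directly in terms of the angle function $\theta$ from \eqref{eq:theta}, reducing the statement to the elementary inequality $\int|\theta'|\,dt \geq \bigl|\int \theta'\,dt\bigr|$ combined with the half-integer constraint on the rotation index of a non-co-orientable closed frontal. Since the lower bound $K(\gamma)\geq\pi$ already follows from Proposition \ref{prop:A-1}, the heart of the work is the equality case, but the angle-function argument below delivers both simultaneously and quite transparently.

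First I would differentiate $\vt(t)=(\cos\theta(t),\sin\theta(t))$ to obtain $\|\vt'(t)\|=|\theta'(t)|$. Combining this with Lemma \ref{lem:t-length} and Lemma \ref{lem:theta-prime} gives the key identity
\begin{equation*}
K(\gamma)=\int_{0}^{2\pi}\|\vt'(t)\|\,dt=\int_{0}^{2\pi}|\theta'(t)|\,dt=\int_{0}^{2\pi}|\tilde{\kappa}(t)|\,dt.
\end{equation*}
Since $\vt$ is smooth on all of $[0,2\pi]$ (including singular points of $\gamma$), the integrand is bounded and the integral is well-defined; this is precisely the trick that avoids the divergence of $k$ at singular points.

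Next, using the triangle inequality for integrals together with the definition $\theta(2\pi)-\theta(0)=2\pi\operatorname{ind}_{\gamma}$,
\begin{equation*}
K(\gamma)\;=\;\int_{0}^{2\pi}|\theta'(t)|\,dt\;\geq\;\bigl|\theta(2\pi)-\theta(0)\bigr|\;=\;2\pi\,|\operatorname{ind}_{\gamma}|.
\end{equation*}
The non-co-orientability of $\gamma$ forces $\operatorname{ind}_{\gamma}=m+1/2$ for some $m\in\Z$, so $|\operatorname{ind}_{\gamma}|\geq 1/2$ and hence $K(\gamma)\geq\pi$.

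For the characterization of equality, $K(\gamma)=\pi$ forces equality in both inequalities above. Equality in the triangle inequality requires $\theta'$, equivalently $\tilde{\kappa}$, to have constant sign on $[0,2\pi]$, which is exactly the definition of local $L$-convexity. Equality in $|\operatorname{ind}_{\gamma}|\geq 1/2$ forces $\operatorname{ind}_{\gamma}=\pm 1/2$. The converse is immediate: if $\gamma$ is locally $L$-convex with $\operatorname{ind}_{\gamma}=\pm 1/2$, then $\theta'$ has constant sign, so $\int|\theta'|\,dt=|\int\theta'\,dt|=\pi$. The main obstacle is essentially nil; the only conceptual step is recognizing that smoothness of $\vt$ (not of $\gamma$) turns the problem into a clean statement about the $1$-form $\tilde{\kappa}\,dt$, after which the non-co-orientability automatically excludes the trivial rotation index $0$.
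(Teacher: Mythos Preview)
Your argument is correct and is essentially identical to the paper's own proof: both compute $K(\gamma)=\int_0^{2\pi}|\theta'(t)|\,dt$ via the angle function, apply the triangle inequality $\int|\theta'|\,dt\geq|\theta(2\pi)-\theta(0)|$, and use the half-integer constraint $\operatorname{ind}_\gamma=m+1/2$ to conclude $K(\gamma)\geq\pi$, with equality forcing constant sign of $\theta'=\tilde{\kappa}$ and $m\in\{0,-1\}$. Your write-up is slightly more explicit about the identity $\|\vt'\|=|\theta'|$ and about the converse direction, but the route is the same.
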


\begin{proof}
Let $\vect{e} : [0,2\pi] \to S^1$ be a unit tangent vector field along 
$\gamma(t)$.
We set $\theta(t)$ the angle function of $\vect{e}(t)$ 
as in \eqref{eq:theta}.
The rotation index is written as
$\operatorname{ind}_{\gamma}=m+1/2$,
where $m \in \Z$ is an integer.
By the triangle inequality, we have
$$
  K(\gamma) 
  =\int_0^{2\pi} |\theta'(t)|\,dt 
  \geq \left|\int_0^{2\pi} \theta'(t)\,dt\right|
  = |\theta(2\pi)-\theta(0)|
  \geq |2m+1|\pi
  \geq \pi.
$$
The equality holds if and only if
$\theta'(t)$ has constant sign and
$m =0,-1$.
Lemma \ref{lem:theta-prime} yields that
$K(\gamma)=\pi$ holds if and only if
$\gamma(t)$ is locally $L$-convex and 
$\operatorname{ind}_{\gamma}=\pm1/2$.
\end{proof}

\begin{proof}[Proof of Theorem \ref{thm:intro2}]
By Propositions \ref{prop:A-1} and \ref{thm:thm1}, 
we have the assertion.
\end{proof}

\section{Proof of Theorem \ref{thm:intro3}}
\label{sec:thm-B}

In this section, we prove Theorem \ref{thm:intro3}.
First, in subsection \ref{sec:cusp},
we discuss the sign function of frontals having cusp singularities.
Next, in subsection \ref{sec:angle},
several estimates of the total absolute curvature are given.
Finally, in subsection \ref{sec:proof-B},
we give a proof of Theorem \ref{thm:intro3}.

\subsection{Sign function}
\label{sec:cusp}

Let $\ga : I\to \R^2$ be a smooth curve.
Then a singular point $t=c$ is called {\it cusp},
if there exist a coordinate change
$t=t(s)$ and a local diffeomorphism 
$\Phi$ of $\R^2$ such that 
$t(0)=c$,  $\Phi(\gamma(c))=(0,0)$
and
$
\Phi \circ \gamma(t(s))=(s^2,s^3)
$
hold.

It is well-known that 
a plane curve $\ga(t)$ has cusp at $t=c$ if and only if 
\begin{equation}\label{eq:criterion}
\gamma'(c)=\vect{0}
\quad
\text{and}
\quad
\det(\gamma''(c),\gamma'''(c))\ne 0
\end{equation}
hold (cf.\ \cite[Theorem 1.3.2]{SUY-book}).
It is known that the curvature function 
$\kappa$ diverges at cusp
\cite[Corollary 1.3.11]{SUY-book}.

Let $\gamma : I \to \R^2$ be a frontal
equipped with a unit tangent vector field $\vt$,
On the regular set $\Reg(\gamma)$,
we set 
$
T={\gamma'}/{\|\gamma'\|}.
$
Then, we define a continuous function 
$\epsilon : \Reg(\gamma) \to \{1,-1\}$ 
as
$$\epsilon(t) = T(t) \cdot \vect{e}(t).$$
We call $\epsilon(t)$ the {\it sign function}.

\begin{lemma}\label{lem:sgn-change}
The sign function $\epsilon(t)$ 
changes its sign across cusp singularities.
\end{lemma}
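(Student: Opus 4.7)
The plan is to reduce $\epsilon(t)$ to the sign of a smooth scalar function that vanishes to first order at the cusp. Because $\gamma$ is a frontal with unit tangent vector field $\vect{e}$, the vectors $\gamma'(t)$ and $\vect{e}(t)$ are linearly dependent, so first I would write
\begin{equation*}
\gamma'(t) = \lambda(t)\,\vect{e}(t)
\end{equation*}
for some smooth function $\lambda : I \to \R$. Substituting this into the definitions of $T$ and $\epsilon$, and using $\|\vect{e}\|\equiv 1$, gives $\|\gamma'(t)\| = |\lambda(t)|$ and
\begin{equation*}
\epsilon(t) = T(t)\cdot \vect{e}(t) = \frac{\gamma'(t)\cdot \vect{e}(t)}{\|\gamma'(t)\|} = \frac{\lambda(t)}{|\lambda(t)|} = \operatorname{sgn}(\lambda(t))
\qquad (t\in \Reg(\gamma)).
\end{equation*}
So the problem is converted to showing that $\lambda$ changes sign across every cusp.

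Next, let $c$ be a cusp. Since $\gamma'(c) = \vect{0}$ and $\vect{e}(c) \neq \vect{0}$, we get $\lambda(c) = 0$, so $c$ is already known to be a zero of $\lambda$. Differentiating $\gamma' = \lambda \vect{e}$ once and evaluating at $c$ yields
\begin{equation*}
\gamma''(c) = \lambda'(c)\,\vect{e}(c) + \lambda(c)\,\vect{e}'(c) = \lambda'(c)\,\vect{e}(c).
\end{equation*}
The cusp criterion \eqref{eq:criterion} forces $\gamma''(c) \neq \vect{0}$ (otherwise $\det(\gamma''(c), \gamma'''(c))=0$), hence $\lambda'(c)\neq 0$. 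Therefore $c$ is a simple zero of $\lambda$, and $\lambda$ changes sign across $c$, which by the identity above forces $\epsilon$ to change sign across the cusp.

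I do not expect a real obstacle here: the whole argument is essentially a first-order Taylor computation, and the only ingredients are the frontal hypothesis (to define $\lambda$ smoothly) and the cusp criterion \eqref{eq:criterion} (to guarantee $\gamma''(c)\neq \vect 0$, i.e.\ the simplicity of the zero of $\lambda$). The mildest subtlety is verifying that $\lambda$ is genuinely smooth across the singular point; this follows because $\vect{e}$ is smooth and nonvanishing, so $\lambda$ can locally be recovered as $\lambda = \gamma'\cdot \vect{e}$, which is smooth on all of $I$.
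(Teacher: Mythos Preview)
Your proof is correct and follows essentially the same approach as the paper: both arguments reduce the claim to showing that the signed speed function $\lambda$ (defined by $\gamma'=\lambda\,\vect{e}$) has a simple zero at the cusp. The paper obtains this via the Hadamard factorization $\gamma'(t)=t\,\vect{x}(t)$ with $\vect{x}(0)=\gamma''(0)\neq\vect{0}$, whereas you obtain it more directly by differentiating $\gamma'=\lambda\,\vect{e}$ and reading off $\lambda'(c)\neq 0$ from $\gamma''(c)\neq\vect{0}$; the two routes are equivalent.
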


\begin{proof}
Let $\gamma : I \to \R^2$ be a frontal
having a cusp at $c\in I$.
Without loss of generality, 
we may assume that $c=0$.
The criterion for the cusp \eqref{eq:criterion}
yields that 
$\gamma'(0)=\vect{0}$ and $\gamma''(0)\ne\vect{0}$
hold.
Therefore, there exists a smooth map 
$\vect{x} : I\to \R^2$ such that 
$\gamma'(t) =t\, \vect{x}(t)$.
Since $\gamma''(0)\ne\vect{0}$, 
we have $\vect{x}\ne \vect{0}$ 
on a neighborhood of $t=0$.
We set $E=\vect{x}/\|\vect{x}\|$.
Then $T = \gamma'/\|\gamma'\|$ is written as
$
T(t) = \sgn (t) \, E(t).
$
On the other hand, either
$$
  \text{(i)}~
  \vt(t)= E(t)
  \quad\text{or}\quad  
  \text{(ii)}~
  \vt(t)= -E(t)
$$
holds.
In the case (i), we have $\epsilon(t) = \sgn (t)$.
In the case (ii), we have $\epsilon(t) = -\sgn (t)$.
Therefore, the assertion holds.
\end{proof}

\begin{corollary}\label{cor:sgn-boundary}
Let $\gamma(t)$ be a front in $\R^2$
defined on a bounded closed interval $[a,b]$
$(a<b)$,
and let $j$ be a positive integer.
Suppose that every singular point is cusp,
and the restriction $\gamma|_{(a,b)}$
has exactly $j$ singular points.
Then the sign function $\epsilon(t)$ satisfies
$\epsilon(a)=(-1)^{j} \epsilon(b)$,
where we set 
$\epsilon(a)=\lim_{t\to a+0}\epsilon(t)$
and 
$\epsilon(b)=\lim_{t\to b-0}\epsilon(t)$.
\end{corollary}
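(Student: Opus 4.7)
The plan is to chain together the local sign-flip behavior of Lemma \ref{lem:sgn-change} across each of the $j$ cusps in $(a,b)$. This is essentially a counting argument combined with the observation that $\epsilon$ takes values in the discrete set $\{+1,-1\}$ and is continuous on the regular set, so it is locally constant there.

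First I enumerate the singular points of $\gamma|_{(a,b)}$ as $a<c_1<c_2<\cdots<c_j<b$; by hypothesis each $c_i$ is a cusp. The open subintervals
$$
I_0=(a,c_1),\quad I_i=(c_i,c_{i+1})\ (1\le i\le j-1),\quad I_j=(c_j,b)
$$
lie inside $\Reg(\gamma)$, so the sign function $\epsilon$ is continuous on each $I_i$ with values in $\{+1,-1\}$ and is therefore constant on $I_i$; call this constant value $\epsilon_i$.

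Next I apply Lemma \ref{lem:sgn-change} at each cusp $c_i$, which yields $\epsilon_i=-\epsilon_{i-1}$ for $i=1,\dots,j$. Iterating gives $\epsilon_j=(-1)^j\epsilon_0$. Finally the one-sided limits $\epsilon(a)=\lim_{t\to a+0}\epsilon(t)$ and $\epsilon(b)=\lim_{t\to b-0}\epsilon(t)$ exist and equal $\epsilon_0$ and $\epsilon_j$ respectively, since $\epsilon$ is identically $\epsilon_0$ on $I_0$ and identically $\epsilon_j$ on $I_j$. Combining these gives $\epsilon(a)=(-1)^j\epsilon(b)$, as claimed.

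There is no real obstacle here: the corollary is essentially an induction on $j$ whose base case $j=1$ is exactly Lemma \ref{lem:sgn-change}. The only mild subtlety is to observe that the one-sided limits at $a$ and $b$ are well-defined even if $a$ or $b$ happens to be a cusp, which is immediate from the local constancy of $\epsilon$ on the adjacent regular subintervals $I_0$ and $I_j$.
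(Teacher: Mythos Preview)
Your proof is correct and follows essentially the same approach as the paper: both apply Lemma~\ref{lem:sgn-change} successively across the $j$ cusps. You simply make explicit what the paper leaves implicit---the local constancy of $\epsilon$ on the regular subintervals $I_0,\dots,I_j$ and the existence of the one-sided limits at $a$ and $b$---but the underlying idea is identical.
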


\begin{proof}
By Lemma \ref{lem:sgn-change},
we have that $\epsilon(t)$ change its sign at cusps.
Hence, 
$\epsilon(a)=\epsilon(b)$
(resp.\ $\epsilon(a)=-\epsilon(b)$), 
if $j$ is even
(resp.\ odd).
\end{proof}

\begin{lemma}\label{lem:theta-integral}
Let $\gamma(t)$ be a frontal in $\R^2$
defined on a bounded closed interval $[a,b]$
$(a<b)$,
Then it holds that
$
  K(\gamma) \geq \arccos (\vt(a)\cdot\vt(b)).
$
\end{lemma}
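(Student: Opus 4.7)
The plan is to reduce the statement to an elementary inequality about the angle function introduced in \eqref{eq:theta}. Choose a smooth lift $\theta:[a,b]\to\R$ so that $\vt(t)=(\cos\theta(t),\sin\theta(t))$. Since $\vt'(t)=\theta'(t)(-\sin\theta(t),\cos\theta(t))$, we have $\|\vt'(t)\|=|\theta'(t)|$, and by Lemma \ref{lem:t-length} the total absolute curvature equals the length of the tangent indicatrix, giving
\[
  K(\gamma)=\int_a^b \|\vt'(t)\|\,dt=\int_a^b |\theta'(t)|\,dt.
\]

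Next, I apply the triangle inequality for integrals:
\[
  \int_a^b |\theta'(t)|\,dt \;\geq\; \left|\int_a^b \theta'(t)\,dt\right|
  = |\theta(b)-\theta(a)|.
\]
On the other hand, from the addition formula we have
\[
  \vt(a)\cdot\vt(b)=\cos\bigl(\theta(b)-\theta(a)\bigr).
\]
The quantity $\arccos(\vt(a)\cdot\vt(b))$ therefore equals $\arccos\cos(\theta(b)-\theta(a))$, which is the representative of $\theta(b)-\theta(a)$ modulo $2\pi$ that lies in $[0,\pi]$. In particular, $\arccos\cos\varphi \leq |\varphi|$ for every $\varphi\in\R$, so
\[
  |\theta(b)-\theta(a)| \;\geq\; \arccos\bigl(\vt(a)\cdot\vt(b)\bigr).
\]
Chaining the two inequalities yields the desired bound.

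There is no substantive obstacle: the argument is essentially the observation that the spherical distance between two points of $S^1$ is no greater than the length of any path joining them, combined with the explicit formula $\|\vt'\|=|\theta'|$. The only point requiring minor care is the last step, where one passes from the linear angle difference $|\theta(b)-\theta(a)|$ (which may exceed $\pi$) to the principal value $\arccos(\vt(a)\cdot\vt(b))\in[0,\pi]$; this is handled by the elementary inequality $\arccos\cos\varphi\leq|\varphi|$.
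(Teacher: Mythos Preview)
Your argument is correct and follows essentially the same route as the paper's proof: lift $\vt$ to an angle function $\theta$, use the triangle inequality $\int_a^b|\theta'|\,dt\geq|\theta(b)-\theta(a)|$ (which is exactly what the paper means by ``a proof similar to Proposition~\ref{thm:thm1}''), and then compare $|\theta(b)-\theta(a)|$ with $\arccos(\vt(a)\cdot\vt(b))$. One minor remark: Lemma~\ref{lem:t-length} is stated only for \emph{closed} frontals on $[0,2\pi]$, so strictly speaking you should invoke the identity $k\,ds=\|\vt'(t)\|\,dt$ from \eqref{eq:kappa} directly to obtain $K(\gamma)=\int_a^b\|\vt'(t)\|\,dt$ on a general interval $[a,b]$.
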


\begin{proof}
By a proof similar to Proposition \ref{thm:thm1},
We have $K(\gamma) \geq |\theta(a)-\theta(b)|$.
Since $|\theta(a)-\theta(b)|\geq \arccos (\vt(a)\cdot\vt(b))$,
we have the desired result.
\end{proof}

\subsection{Angle at the endpoints of frontals}
\label{sec:angle}

The following fact is known (see \cite[Proposition 3.21]{ASSN}, for example), 
cf.\ Figure \ref{fig:cusp1}.

\begin{fact}
\label{fact:sing-0}
Let $\gamma(t)$ be a frontal in $\R^2$
defined on a bounded closed interval $[a,b]$
$(a<b)$. 
If $\gamma(a)=\gamma(b)$,
and the restriction 
$\gamma|_{(a,b)}$
is a regular curve,
then $K(\gamma)> \pi$ holds.
\end{fact}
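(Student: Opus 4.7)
The plan is to convert the closing condition $\gamma(a)=\gamma(b)$ into the vector integral identity $\int_a^b\|\gamma'\|\,\vect{e}\,dt=\vect{0}$ and then to show that a unit tangent path $\vect{e}:[a,b]\to S^1$ satisfying this identity cannot have total length $\leq\pi$ without being constant, which itself would contradict the closing condition.

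First I would normalize orientations. Since $\gamma|_{(a,b)}$ is regular, the sign function $\epsilon(t)=T(t)\cdot\vect{e}(t)$ of subsection~\ref{sec:cusp} is $\{\pm1\}$-valued and continuous on the connected interval $(a,b)$, hence constant; after replacing $\vect{e}$ by $-\vect{e}$ if necessary, one may assume $\gamma'(t)=\|\gamma'(t)\|\,\vect{e}(t)$ on $(a,b)$, and both sides extend continuously to $[a,b]$. Integrating and using $\gamma(a)=\gamma(b)$ yields
$$
\int_a^b \|\gamma'(t)\|\,\vect{e}(t)\,dt \;=\; \vect{0}.
$$

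Next I argue by contradiction, assuming $K(\gamma)\leq\pi$. With $\theta$ a continuous angle function of $\vect{e}$ as in \eqref{eq:theta}, Lemma~\ref{lem:theta-prime} together with \eqref{eq:kappa} gives
$$
K(\gamma)=\int_a^b|\theta'(t)|\,dt\;\geq\;\max_{[a,b]}\theta-\min_{[a,b]}\theta,
$$
so the range of $\theta$ has length at most $\pi$. Consequently there exists a unit vector $v\in\R^2$ with $\vect{e}(t)\cdot v\geq 0$ for every $t\in[a,b]$. Taking the inner product of the vanishing integral with $v$ produces a nonnegative continuous integrand $\|\gamma'(t)\|\,(\vect{e}(t)\cdot v)$ whose integral over $[a,b]$ is zero; since $\|\gamma'\|>0$ on the dense open set $(a,b)$, continuity forces $\vect{e}(t)\cdot v\equiv 0$ on $[a,b]$. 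Hence $\vect{e}$ takes values in $\{\pm v^\perp\}$, and by continuity on the connected set $[a,b]$ it is constant, say $\vect{e}\equiv\vect{e}_0$. But then
$$
\gamma(b)-\gamma(a)=\Bigl(\int_a^b\|\gamma'(t)\|\,dt\Bigr)\vect{e}_0\ne\vect{0},
$$
contradicting $\gamma(a)=\gamma(b)$.

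The argument is quite direct, but the point that needs attention is the closed-versus-open nature of the half-plane. A hypothetical bound $K(\gamma)<\pi$ would immediately place the image of $\vect{e}$ in an \emph{open} half-plane, and the vanishing integral would then fail for trivial reasons; the actual delicacy is the equality case $K(\gamma)=\pi$, where $\vect{e}$ may a priori touch the boundary line, and one must exploit positivity of $\|\gamma'\|$ on $(a,b)$ together with continuity of $\vect{e}$ at the (possibly singular) endpoints $a$ and $b$ to rule this out. This is the only real obstacle and is handled by the inner-product step above.
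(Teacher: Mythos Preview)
Your argument is correct. Note, however, that the paper does not supply its own proof of this statement: it is recorded as Fact~\ref{fact:sing-0} with a reference to \cite[Proposition~3.21]{ASSN}, so there is no in-paper argument to compare against. Your route via the closing identity $\int_a^b\|\gamma'\|\,\vect{e}\,dt=\vect{0}$, followed by the observation that $K(\gamma)\le\pi$ confines $\vect{e}$ to a closed half-circle, is the classical Fenchel-type mechanism and is fully self-contained. The only subtle point---ruling out equality $K(\gamma)=\pi$---is handled correctly: positivity of $\|\gamma'\|$ on the open interval $(a,b)$ together with the nonnegative integrand forces $\vect{e}\cdot v\equiv 0$, hence $\vect{e}$ constant, which contradicts $\gamma(a)=\gamma(b)$ since $\int_a^b\|\gamma'\|\,dt>0$.
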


\begin{figure}[htbp]
\centering
  \resizebox{3.5cm}{!}{\includegraphics{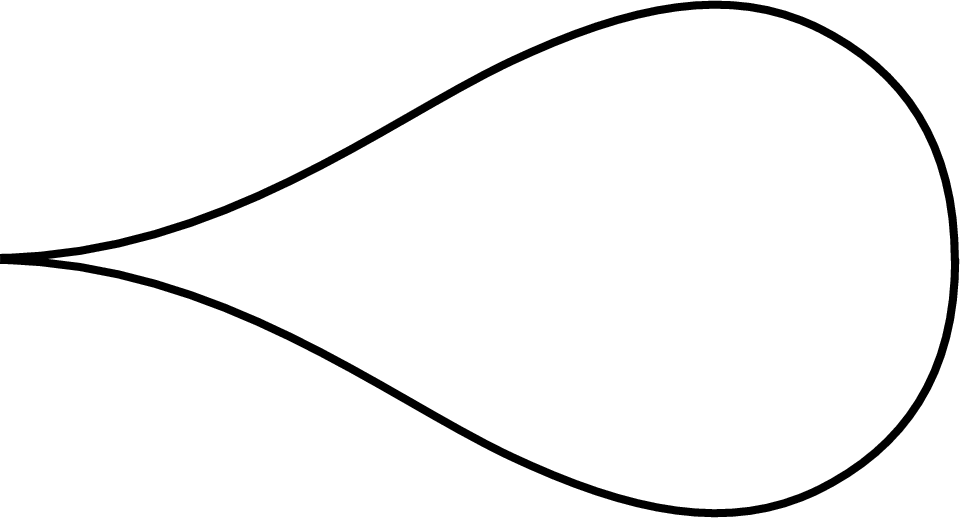}} 
 \caption{
 A frontal having the same endpoints but with no singular point in the interior.
 }
 \label{fig:cusp1}
\end{figure} 

We generalize Fact \ref{fact:sing-0} to frontals
(Propositions \ref{prop:cusp-2} and \ref{prop:cusp-1}).

\begin{definition}
Let $\gamma(t)$ be a frontal in $\R^2$
defined on a bounded closed interval $[a,b]$
$(a<b)$.
Suppose that $\gamma(a)=\gamma(b)$,
and the restriction $\gamma|_{(a,b)}$
has finite singular points.
Then the number $\varphi\in [0,\pi]$ defined by
$$
 \varphi=\arccos\left(-T(a)\cdot T(b) \right)
$$
is called the {\it angle at the endpoints} of $\gamma$,
where $T=\gamma'/\|\gamma'\|$ and
$$
 T(a)=\lim_{t\to a+0} T(t),\qquad
 T(b)=\lim_{t\to b-0} T(t).
$$
\end{definition}

\begin{proposition}\label{prop:cusp-2}
Let $\gamma(t)$ be a frontal in $\R^2$
defined on a bounded closed interval $[a,b]$
$(a<b)$.
Suppose that every singular point is cusp,
$\gamma(a)=\gamma(b)$,
and the restriction $\gamma|_{(a,b)}$
has exactly two singular points.
Then $K(\gamma)\ge \pi-\varphi$ holds,
where $\varphi \in [0,\pi]$ is the angle at endpoints of $\ga$.
\end{proposition}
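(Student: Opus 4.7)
The plan is to reduce the inequality for $\gamma$ to a direct application of Lemma \ref{lem:theta-integral}, using the evenness of the number of cusps to convert between the unit tangent vector field $\vt$ (which appears in that lemma) and the normalized velocity $T$ (which appears in the definition of the angle $\varphi$).

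First, I would invoke Corollary \ref{cor:sgn-boundary} with $j=2$. Since $(-1)^2 = 1$, the sign function satisfies $\epsilon(a) = \epsilon(b)$. Next, I would use the relation $\vt(t) = \epsilon(t)\, T(t)$, valid on the regular set, in order to take one-sided limits at $t=a$ and $t=b$. This gives
\[
 \vt(a)\cdot \vt(b) \;=\; \epsilon(a)\epsilon(b)\,\bigl(T(a)\cdot T(b)\bigr) \;=\; T(a)\cdot T(b),
\]
where the second equality uses that $\epsilon(a)\epsilon(b) = 1$. By the definition of the angle at the endpoints, $T(a)\cdot T(b) = -\cos\varphi$, and hence $\vt(a)\cdot \vt(b) = -\cos\varphi$.

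Finally, I would plug this into Lemma \ref{lem:theta-integral}:
\[
 K(\gamma) \;\geq\; \arccos\bigl(\vt(a)\cdot \vt(b)\bigr) \;=\; \arccos(-\cos\varphi) \;=\; \pi-\varphi,
\]
where the last identity follows since $\varphi\in[0,\pi]$. This completes the chain of inequalities.

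The potentially subtle step is the second one: ensuring that the identity $\vt = \epsilon\, T$ passes to the appropriate one-sided limits at the cusps at $t=a$ and $t=b$ (or more precisely, that $T$ and $\vt$ extend continuously to the boundary with the stated relationship). This is a direct consequence of the frontal condition together with the proof of Lemma \ref{lem:sgn-change}, which shows that $\vt = \pm E$ for a smooth unit vector $E$ extending through the cusp, so no serious obstacle remains. The rest of the argument is algebraic.
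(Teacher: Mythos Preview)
Your proposal is correct and follows essentially the same route as the paper: invoke Corollary~\ref{cor:sgn-boundary} to get $\epsilon(a)=\epsilon(b)$, convert $T(a)\cdot T(b)$ into $\vt(a)\cdot\vt(b)$, and then apply Lemma~\ref{lem:theta-integral} (the paper phrases this last step via Lemma~\ref{lem:t-length} and the inequality $\mathcal{L}(\vt)\geq\arccos(\vt(a)\cdot\vt(b))$, which amounts to the same thing). One small inaccuracy in your commentary: the two cusps lie in the open interval $(a,b)$, not at the endpoints $t=a,b$; the one-sided limits of $T$ at $a$ and $b$ are taken there simply because that is how $T(a)$, $T(b)$, $\epsilon(a)$, $\epsilon(b)$ are defined, not because $a$ and $b$ are assumed singular.
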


\begin{proof}
By Corollary \ref{cor:sgn-boundary},
we have $\epsilon(a) = \epsilon(b)$.
Since $T(t)=\epsilon(t)\vt(t)$,
it holds that
$
  T(a)\cdot T(b)
  =\left(\epsilon(a) \vt(a) \right) \cdot \left(\epsilon(b) \vt(b) \right)  
  =\vt(a)\cdot \vt(b).
$
Hence we have
$$
  \varphi 
  = \arccos(-T(a)\cdot T(b))
  = \arccos(-\vt(a)\cdot \vt(b))
  = \pi - \arccos(\vt(a)\cdot \vt(b)).
$$
Since the length $\mathcal{L}(\vt)$ of $\vt$ 
satisfies $\mathcal{L}(\vt)\geq\arccos(\vt(a)\cdot \vt(b))$,
Lemma \ref{lem:t-length} yields that
$K(\gamma)=\mathcal{L}(\vect{e})\geq \pi-\varphi$.
\end{proof}

\begin{figure}[htbp]
\centering
  \resizebox{3.5cm}{!}{\includegraphics{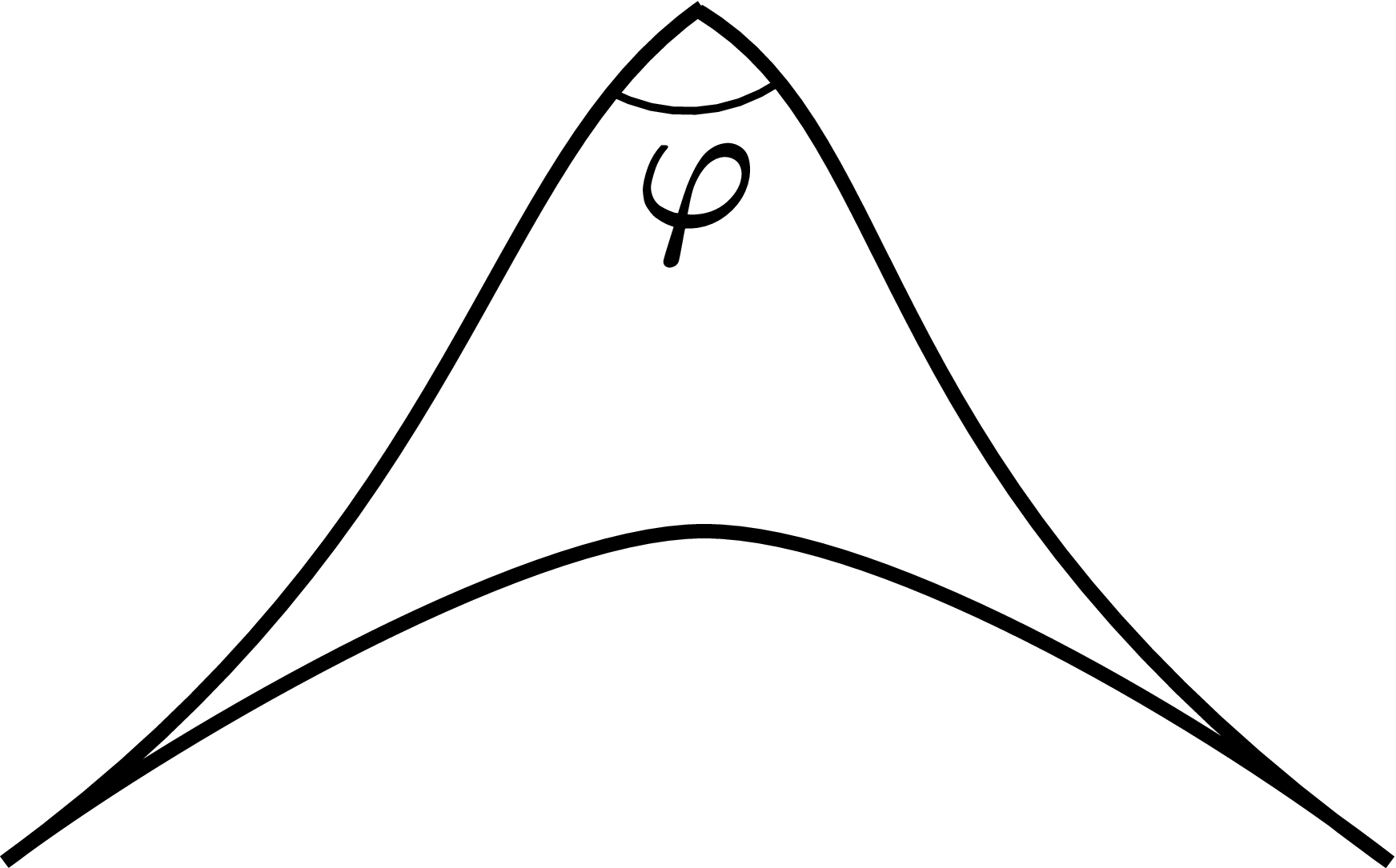}} 
 \caption{
 A frontal having the same endpoints but with two singular points in the interior.
 }
 \label{fig:2-sing}
\end{figure} 

\begin{proposition}\label{prop:cusp-1}
Let $\gamma(t)$ be a frontal in $\R^2$
defined on a bounded closed interval $[a,b]$
$(a<b)$.
Suppose that every singular point is cusp,
$\gamma(a)=\gamma(b)$,
and the restriction $\gamma|_{(a,b)}$
has exactly one singular point.
Then $K(\gamma)>\varphi$ holds,
where $\varphi \in [0,\pi]$ is the angle at endpoints of $\ga$.
\end{proposition}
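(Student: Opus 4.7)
The non-strict bound $K(\gamma)\geq\varphi$ is immediate: since $\gamma|_{(a,b)}$ has a single cusp ($j=1$ odd), Corollary \ref{cor:sgn-boundary} gives $\epsilon(a)=-\epsilon(b)$, so $T(a)\cdot T(b)=\epsilon(a)\epsilon(b)\,\vt(a)\cdot\vt(b)=-\vt(a)\cdot\vt(b)$, hence $\varphi=\arccos(\vt(a)\cdot\vt(b))$ and Lemma \ref{lem:theta-integral} yields $K(\gamma)\geq\varphi$. The real task is to promote this to a strict inequality, which I would do by contradiction.

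Assume $K(\gamma)=\varphi$. Tracing the proof of Proposition \ref{thm:thm1}, equality in $K(\gamma)\geq|\theta(b)-\theta(a)|\geq\arccos(\vt(a)\cdot\vt(b))$ forces the angle function $\theta(t)$ (with $\vt=(\cos\theta,\sin\theta)$) to be monotonic on $[a,b]$, with total variation exactly $\varphi\in[0,\pi]$. After an isometry of $\R^2$ and, if necessary, a sign flip of $\vt$, I arrange $\gamma(c)=\vect{0}$, $\vt(c)=(1,0)$, $\theta$ non-decreasing, and $\theta(a)=-\alpha_1\le 0\le\alpha_2=\theta(b)$ with $\alpha_1+\alpha_2=\varphi\leq\pi$.

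Following the analysis in the proof of Lemma \ref{lem:sgn-change}, write $\gamma'(t)=(t-c)\vect{x}(t)$ near $c$ with $\vect{x}(c)\ne\vect{0}$; parametrize $\gamma|_{[a,c]}$ in the reverse direction by arclength and $\gamma|_{[c,b]}$ forward by arclength. These are two regular curves $\beta_1,\beta_2$ emanating from the origin with a common initial tangent $\pm\vect{x}(c)/\|\vect{x}(c)\|$, which the coordinate choice makes $(1,0)$. The monotonicity of $\theta$ translates, after suitable indexing, to tangent-angle functions $\tau_1(s),\tau_2(s)$ with $\tau_1(0)=\tau_2(0)=0$, $\tau_1$ non-increasing to $-\alpha_1$ on $[0,L_1]$, and $\tau_2$ non-decreasing to $\alpha_2$ on $[0,L_2]$. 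Since $\alpha_i\le\pi$, one has $\tau_1\in[-\pi,0]$ and $\tau_2\in[0,\pi]$, hence $\sin\tau_1\le 0$ and $\sin\tau_2\ge 0$. Integrating, $\beta_1$ lies in the closed lower half-plane and $\beta_2$ in the closed upper half-plane.

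The hypothesis $\gamma(a)=\gamma(b)$ identifies the endpoints of $\beta_1$ and $\beta_2$, so the common endpoint must lie on the $x$-axis. The relation $\int_0^{L_i}\sin\tau_i\,ds=0$ combined with $\sin\tau_i$ of constant sign and continuity with $\tau_i(0)=0$ forces $\tau_i\equiv 0$, whence $\alpha_1=\alpha_2=0$ and $\vt$ is constant on $[a,b]$. But then $\gamma$ lies on a single line through $\gamma(c)$, so $\gamma''(c)$ and $\gamma'''(c)$ are both parallel to $\vt(c)$, giving $\det(\gamma''(c),\gamma'''(c))=0$, which contradicts the cusp criterion \eqref{eq:criterion}. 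Hence $K(\gamma)>\varphi$. The main obstacle is extracting the monotonicity of $\theta$ and the correct identification of initial tangents across the cusp so as to force the two arcs into opposite closed half-planes; once set up, the half-plane argument closes the proof cleanly.
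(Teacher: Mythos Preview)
Your argument is correct and takes a genuinely different route from the paper. The paper's proof is a direct case analysis: it normalizes $\gamma(c)=\vect{0}$, $\vt(c)=(0,1)$, locates the parameters $t_1,t_2$ where the $x$-coordinate of $\gamma$ is maximal and minimal (using the local geometry of a cusp to rule out $t_i=c$ and $t_1=t_2$), and then works through six configurations of $a,t_1,t_2,c,b$, each split into sub-cases according to whether $\vt(t_i)=\pm\vt(c)$; in every sub-case Lemma~\ref{lem:theta-integral} applied on the sub-intervals, combined with the strict positivity of $K$ on any interval touching the cusp (since curvature diverges there), yields $K(\gamma)>\varphi$. You instead argue by contradiction from $K(\gamma)=\varphi$: equality forces $\theta$ to be monotone with total variation $\varphi\le\pi$, and then the half-plane confinement of the two arcs issuing from the cusp, together with $\gamma(a)=\gamma(b)$, forces $\theta$ to be constant, contradicting the cusp criterion~\eqref{eq:criterion}. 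Your approach bypasses the case analysis entirely and is substantially shorter; the paper's approach, on the other hand, never invokes the equality hypothesis and makes visible in each geometric configuration exactly which piece of the curve contributes the strict excess.
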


\begin{proof}
Let $c\in (a,b)$ be the singular point of $\gamma(t)$.
By Corollary \ref{cor:sgn-boundary},
we have $\epsilon(a) = -\epsilon(b)$.
Since $T(t)=\epsilon(t)\vt(t)$,
it holds that
$$
  \varphi = \arccos(-T(a)\cdot T(b))
  = \arccos(\vt(a)\cdot\vt(b)).
$$

If necessary, by a rotation and translation of $\R^2$,
we may assume that
$\gamma(c)=(0,0)$ and 
$\vt(c)=(0,1)$ hold.
We denote by $\gamma(t)=(x(t),y(t))$.
Since $x(t)$ is a continuous function on a closed interval $[a,b]$,
there exist $t_1, t_2 \in [a,b]$ such that
$\max_{t\in [a,b]} x(t) = x(t_1)$ and $\min_{t\in [a,b]} x(t) = x(t_2)$ hold.
If $t_1 =t_2$,
then $\max_{t\in [a,b]} x(t)=\min_{t\in [a,b]} x(t)$ holds.
Hence we have $x(t)$ is identically zero, 
and the image of $\gamma(t)$
is a subset of the $y$-axis.
Then the curvature function is identically zero,
which contradicts the fact that the curvature diverges at cusps
(\cite[Corollary 1.3.11]{SUY-book}).
Hence $t_1\ne t_2$ holds.
If necessary,
by changing the orientation of $t$, 
we may assume that $t_1<t_2$. 

Also we have that $t_1,t_2\ne c$.
In fact, if $t_1=c$, then $x(t)\leq 0$ holds.
The tangent line\footnote{The tangent line of a front at a cusp
is also called the {\it centerline} \cite[Corollary 1.3.11]{SUY-book}.}
of $\gamma(t)$ at the cusp $t=c$
divides the curve $\gamma(t)$ into two parts, 
one on each side
(\cite[Corollary 1.3.11]{SUY-book}).
Since the centerline of $\gamma(t)$ at $t=c$ is the $y$-axis, 
\cite[Corollary 1.3.11]{SUY-book} yields a contradiction,
and hence $t_1\ne c$.
Similarly, we have $t_2\ne c$.

We set
$$
\arccos (\vt(a)\cdot \vt(c)) = \varphi_a, \qquad
\arccos (\vt(b)\cdot \vt(c)) = \varphi_b.
$$
Then, $\varphi_a+\varphi_b\geq\varphi$ holds.

\medskip
\noindent
\underline{\bf (I)~The case of $a<t_1<c<t_2<b$.}\quad
The total absolute curvature $K(\gamma)$ is divided into
\begin{equation}\label{eq:case-I}
K(\gamma)=
K(\gamma|_{[a,t_1]})+
K(\gamma|_{[t_1,c]})+
K(\gamma|_{[c,t_2]})+
K(\gamma|_{[t_2,b]}).
\end{equation}
Then it holds that
\begin{equation}\label{eq:case-I-ep}
K(\gamma|_{[t_1,c]})>0\quad \text{and}\quad K(\gamma|_{[c,t_2]})>0.
\end{equation}
In fact, if we suppose $K(\gamma|_{[t_1,c]})=0$,
then $\kappa(t)$ is identically zero on $[t_1,c]$,
which contradicts the fact that the curvature diverges at cusps
(\cite[Corollary 1.3.11]{SUY-book}).
Hence, we have $K(\gamma|_{[t_1,c]})>0$.
Similarly, $K(\gamma|_{[c,t_2]})>0$ holds.

Since $x'(t_1)=x'(t_2)=0$,
we have $\vt =(0,\pm1)$ at $t_1, t_2$,
namely, $\vt(t_i) =\pm \vt(c)$ holds $(i=1,2)$.
Thus, either (I-i), (I-ii), (I-iii) or (I-iv) occurs,
cf.\ Figure \ref{fig:caseI}.
\begin{itemize}
\item[(I-i)] If $\vt(t_1) =\vt(t_2) =\vt(c)$,
$K(\gamma|_{[a,t_1]})\geq \varphi_a$ and
$K(\gamma|_{[t_2,b]})\geq \varphi_b$ hold.
By \eqref{eq:case-I} and \eqref{eq:case-I-ep},
we have
$
K(\gamma)>
K(\gamma|_{[a,t_1]})+
K(\gamma|_{[t_2,b]})
\geq
\varphi_a + \varphi_b
\geq \varphi.
$

\item[(I-ii)] If $\vt(t_1) =-\vt(t_2) =\vt(c)$,
Lemma \ref{lem:theta-integral} yields
$K(\gamma|_{[c,t_2]})\geq \arccos (\vt(c)\cdot \vt(t_2))=\pi$. 
By \eqref{eq:case-I} and \eqref{eq:case-I-ep},
it holds that
$K(\gamma)>\pi \geq \varphi.$
\item[(I-iii)] If $-\vt(t_1) =\vt(t_2) =\vt(c)$,
as in the case (I-ii), we have
$K(\gamma|_{[t_1,c]})\geq \pi$.
By \eqref{eq:case-I} and \eqref{eq:case-I-ep},
it holds that
$K(\gamma)>\pi \geq \varphi.$
\item[(I-iv)] If $\vt(t_1) =\vt(t_2) =-\vt(c)$,
as in the cases (I-ii) and (I-iii),
we have
$K(\gamma|_{[t_1,c]})\geq \pi$.
By \eqref{eq:case-I} and \eqref{eq:case-I-ep},
it holds that
$K(\gamma)>\pi \geq \varphi.$
\end{itemize}
Therefore, in all cases (I-i), (I-ii), (I-iii), and (I-iv),
we have
$K(\gamma)>\varphi$.

\begin{figure}[htbp]
 \begin{tabular}{@{\hspace{-4mm}}c@{\hspace{4mm}}c@{\hspace{5mm}}c@{\hspace{4mm}}c}
  \mbox{\raisebox{3.5mm}{\includegraphics[width=30mm]{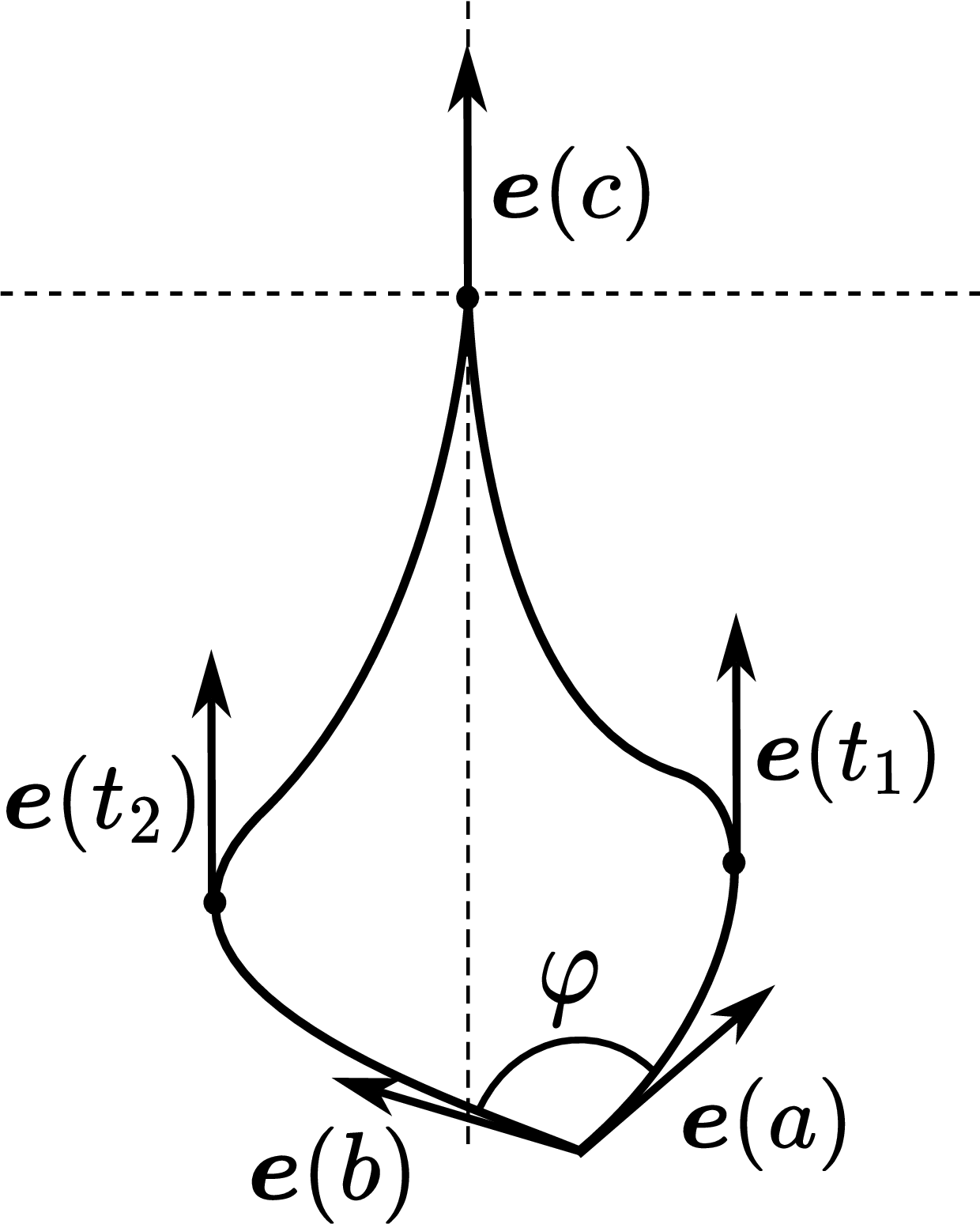}}} &
  \resizebox{3cm}{!}{\includegraphics{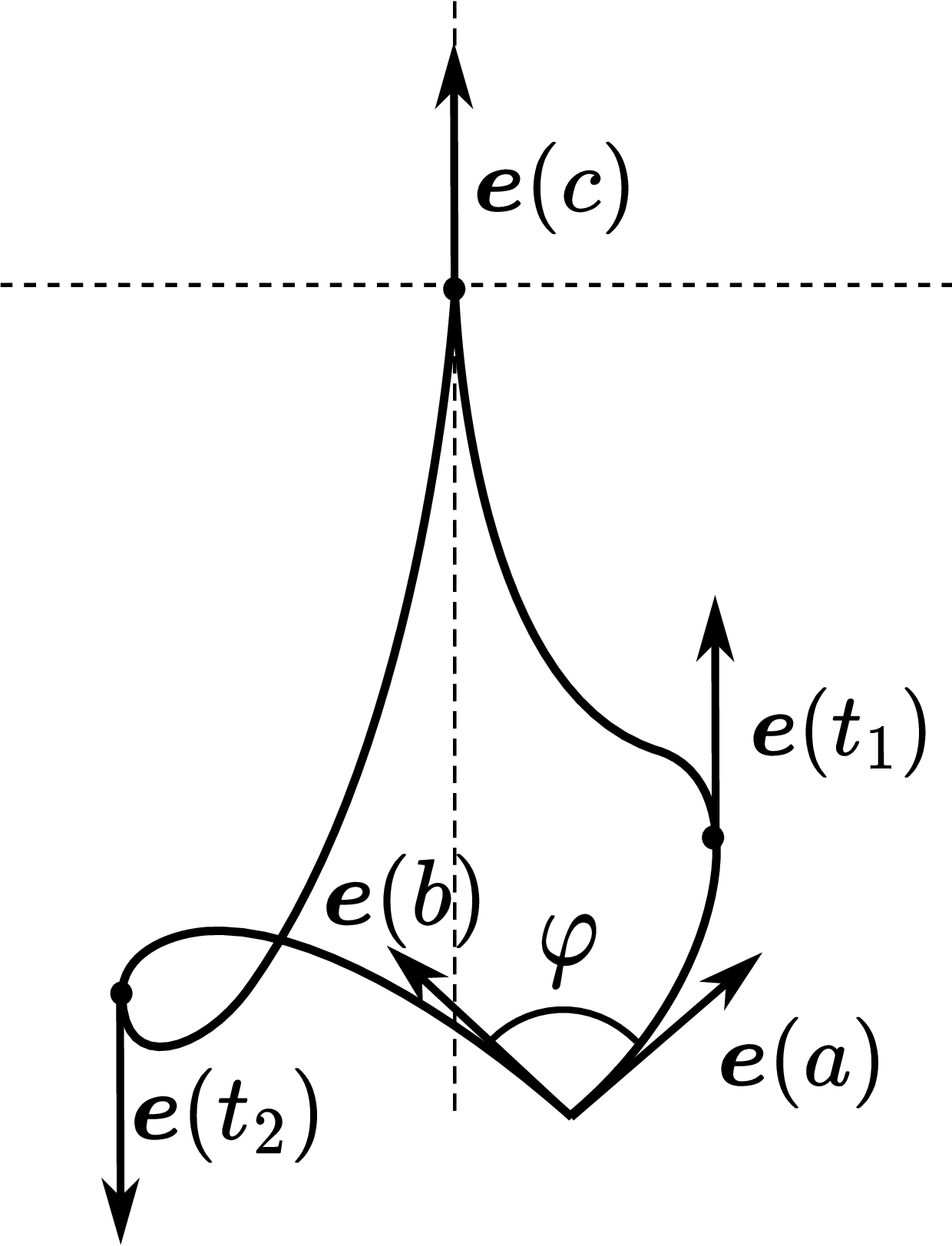}} &
  \mbox{\raisebox{4.4mm}{\includegraphics[width=32mm]{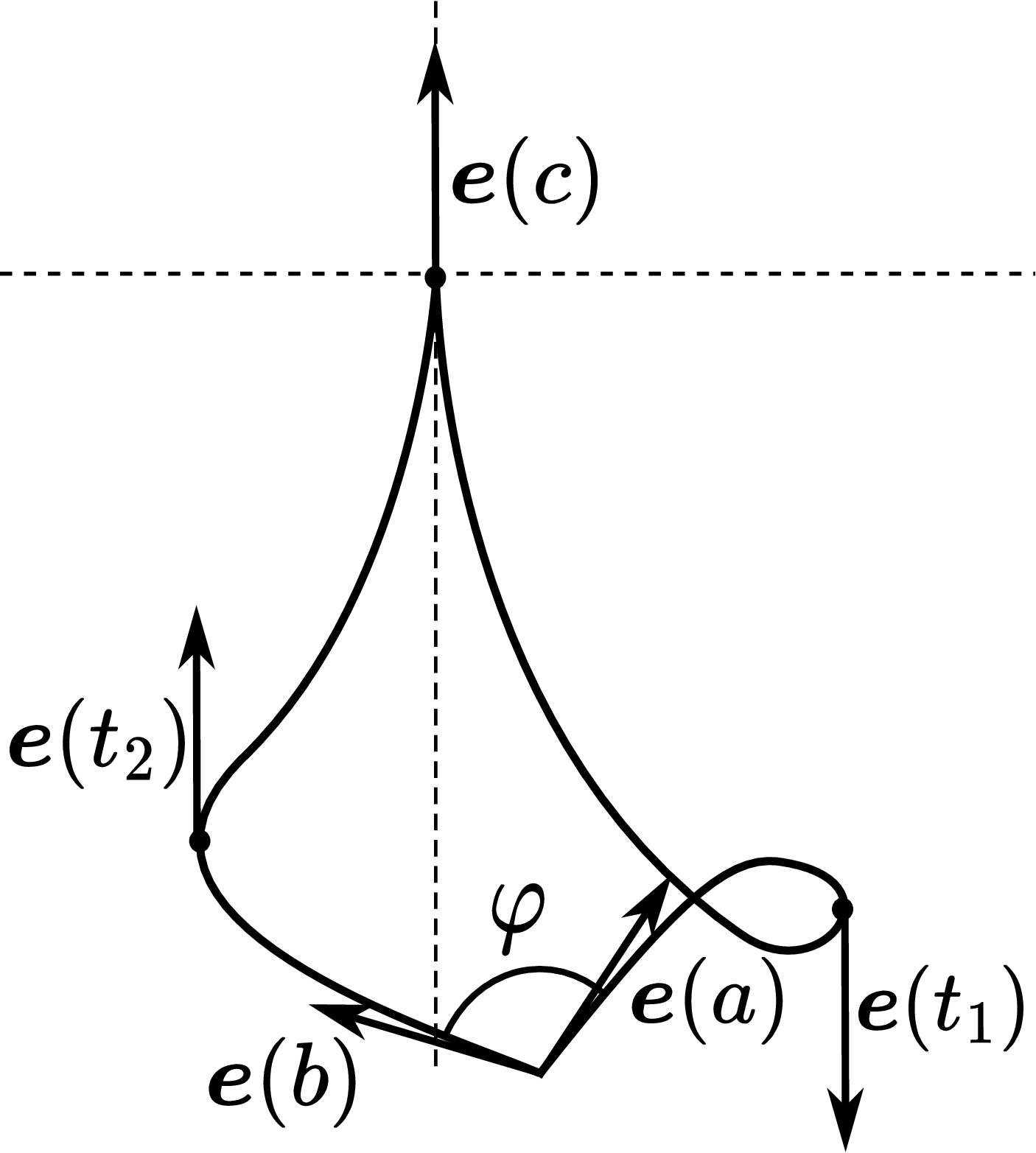}}} &
  \mbox{\raisebox{8mm}{\includegraphics[width=32mm]{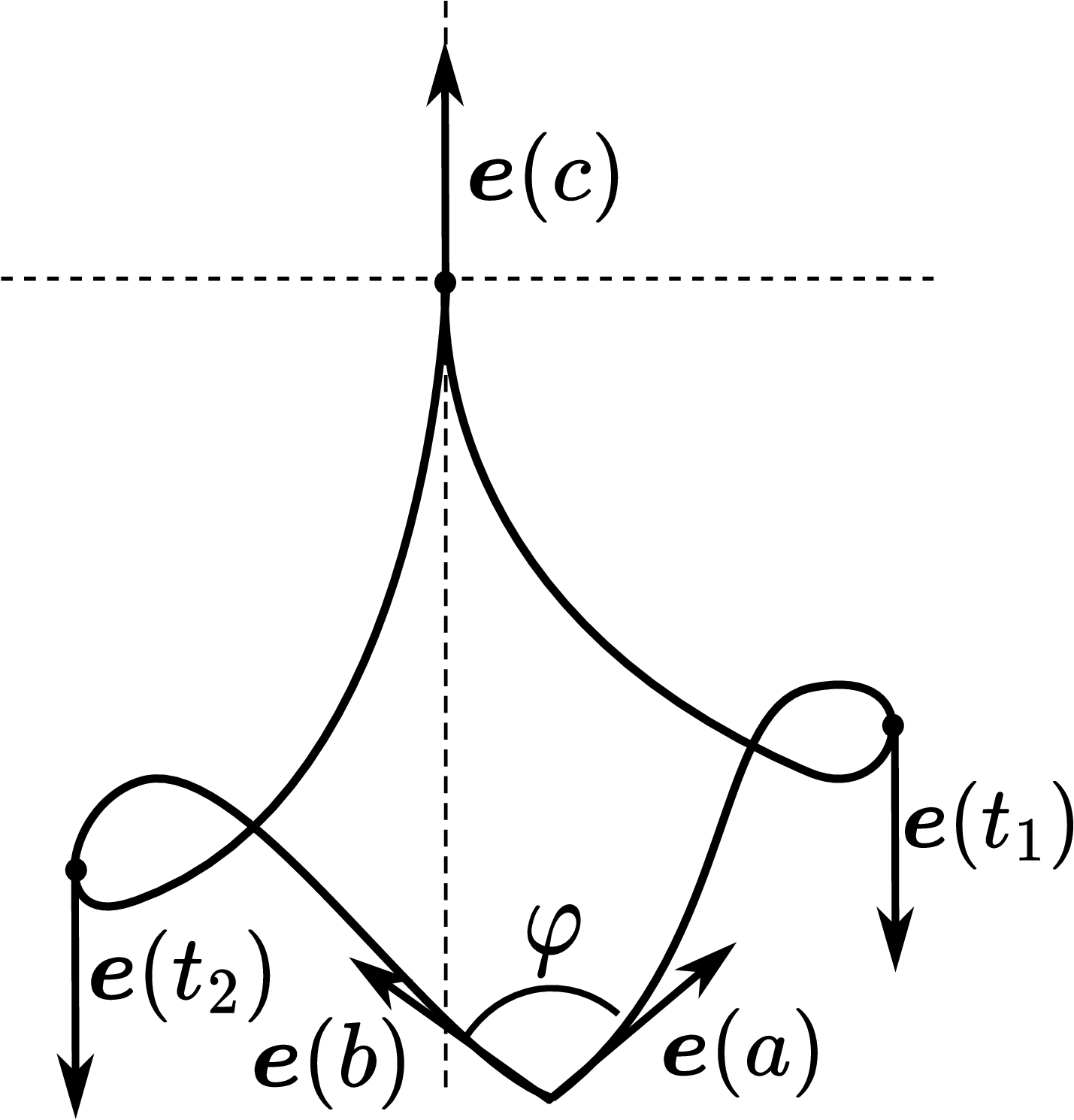}}} 
  \vspace{6mm}\\
  {\footnotesize Case (I-i)} &
  {\footnotesize Case (I-ii)} &
  {\footnotesize Case (I-iii)} &
  {\footnotesize Case (I-iv)} 
 \end{tabular}  
 \caption{
 Frontals having the same endpoints but with one singular point in the interior,
 the case (I).
 }
 \label{fig:caseI}
\end{figure}

\medskip
\noindent
\underline{\bf (II)~The case of $a<t_1<t_2<c<b$.}\quad
The total absolute curvature 
$K(\gamma)$ is written as
$
K(\gamma)=
K(\gamma|_{[a,t_1]})+
K(\gamma|_{[t_1,t_2]})+
K(\gamma|_{[t_2,c]})+
K(\gamma|_{[c,b]}).
$
As in \eqref{eq:case-I-ep}, 
we have
\begin{equation}\label{eq:case-II-ep}
K(\gamma|_{[t_2,c]})>0.
\end{equation}
Similarly,  
\begin{equation}\label{eq:case-II-ep2}
K(\gamma|_{[t_1,t_2]})>0
\end{equation}
holds. 
Then either (II-i), (II-ii), (II-iii) or (II-iv) occurs,
cf.\ Figure \ref{fig:caseII}.

\begin{itemize}
\item[(II-i)] If $\vt(t_1) =\vt(t_2) =\vt(c)$,
$K(\gamma|_{[a,t_1]})\geq \varphi_a$ and
$K(\gamma|_{[c,b]})\geq \varphi_b$ hold.
By \eqref{eq:case-II-ep},
we have
$
K(\gamma)>
K(\gamma|_{[a,t_1]})+
K(\gamma|_{[c,b]})
\geq
\varphi_a + \varphi_b
\geq \varphi.
$
\item[(II-ii)] If $\vt(t_1) =-\vt(t_2) =\vt(c)$,
Lemma \ref{lem:theta-integral} yields
$K(\gamma|_{[t_1,t_2]})\geq \arccos (\vt(t_1)\cdot \vt(t_2))=\pi$
and
$K(\gamma|_{[c,t_2]})\geq \arccos (\vt(c)\cdot \vt(t_2))=\pi.$
Hence,
it holds that
$K(\gamma)\geq2\pi> \varphi.$
\item[(II-iii)] If $-\vt(t_1) =\vt(t_2) =\vt(c)$,
as in the case (II-ii), we have
$K(\gamma|_{[t_1,t_2]})\geq \pi$.
By \eqref{eq:case-II-ep},
it holds that
$K(\gamma)>\pi \geq \varphi.$
\item[(II-iv)] If $\vt(t_1) =\vt(t_2) =-\vt(c)$,
as in the cases (II-ii) and (II-iii),
we have
$K(\gamma|_{[t_2,c]})\geq \pi$.
By \eqref{eq:case-II-ep2},
it holds that
$K(\gamma)>\pi \geq \varphi.$
\end{itemize}
Therefore, in all cases (II-i), (II-ii), (II-iii), and (II-iv),
we have
$K(\gamma)>\varphi$.

\begin{figure}[htbp]
 \begin{tabular}{@{\hspace{-4mm}}c@{\hspace{4mm}}c@{\hspace{5mm}}c@{\hspace{4mm}}c}
  \mbox{\raisebox{1.4mm}{\includegraphics[width=30mm]{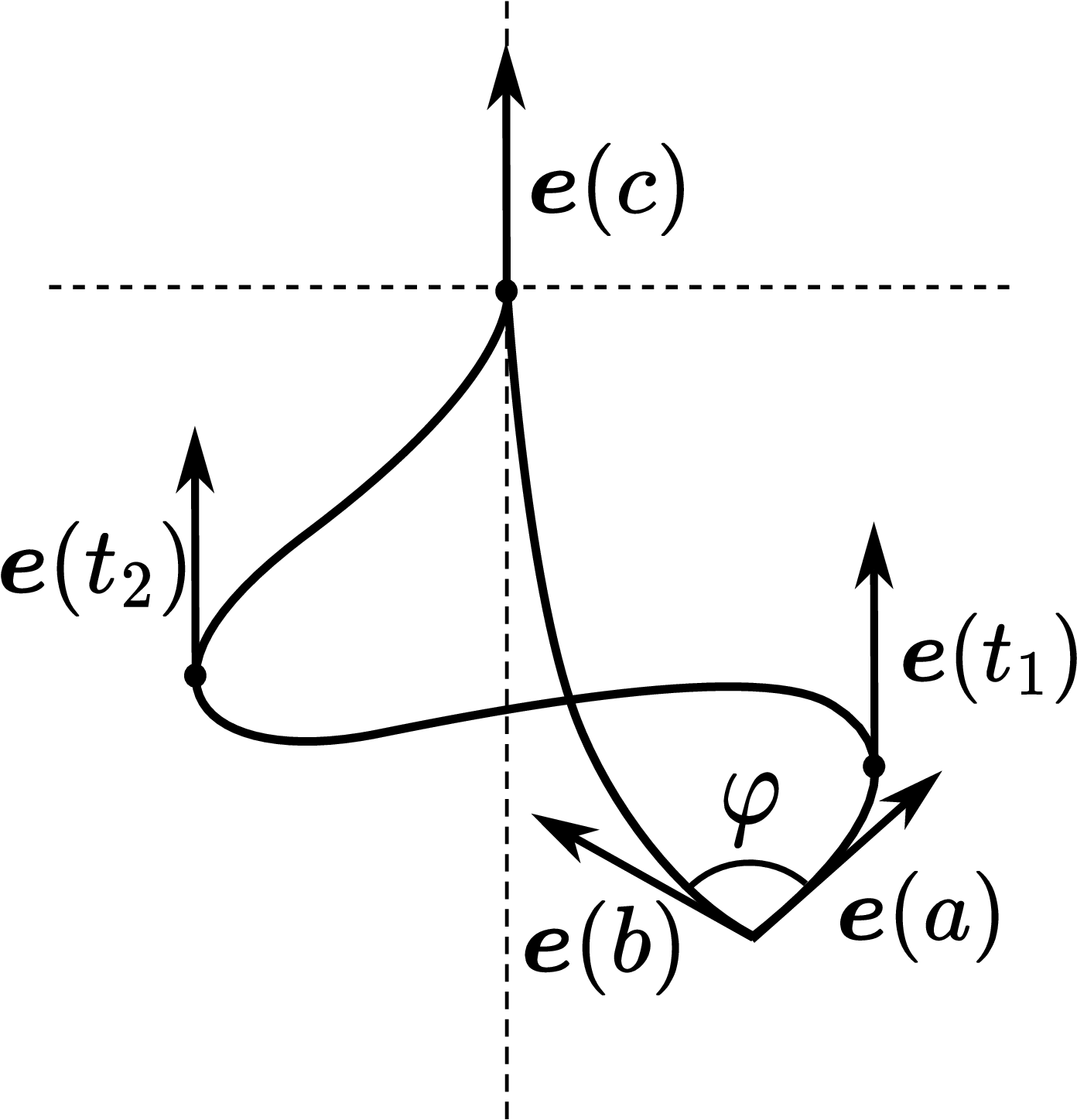}}} &
  \mbox{\raisebox{0.8mm}{\includegraphics[width=30mm]{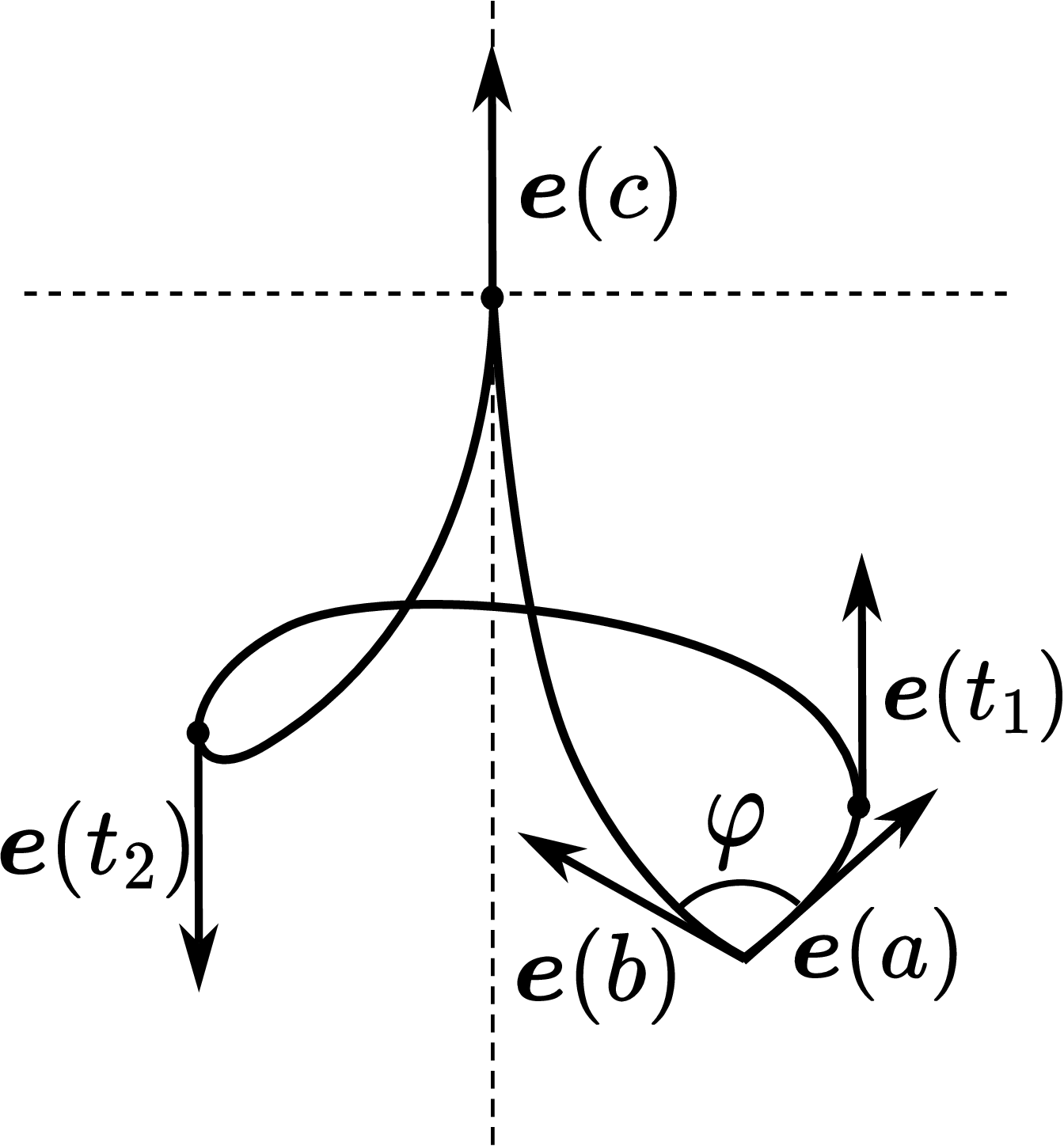}}} &
  \mbox{\raisebox{0mm}{\includegraphics[width=32mm]{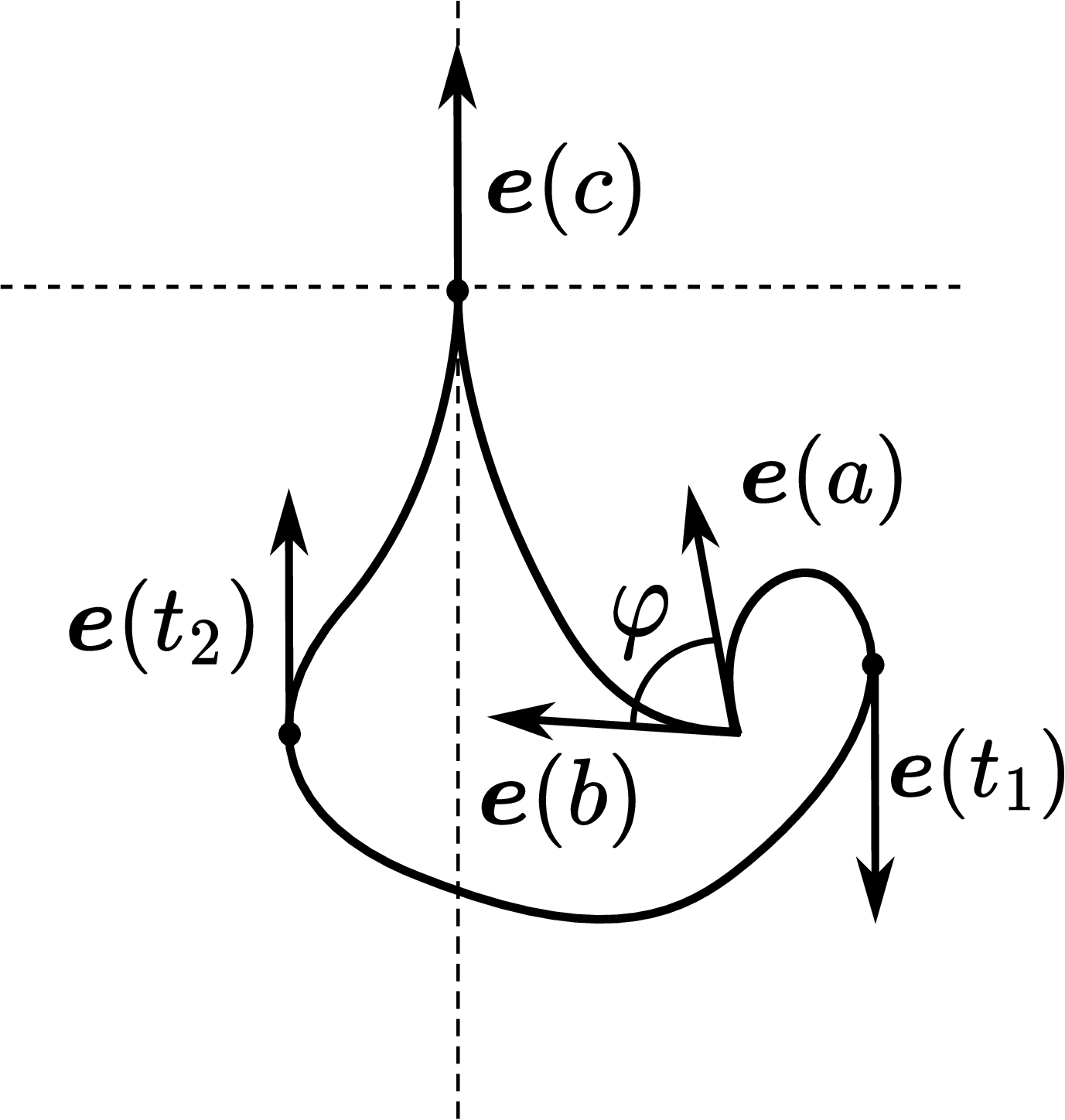}}} &
  \mbox{\raisebox{0.4mm}{\includegraphics[width=32mm]{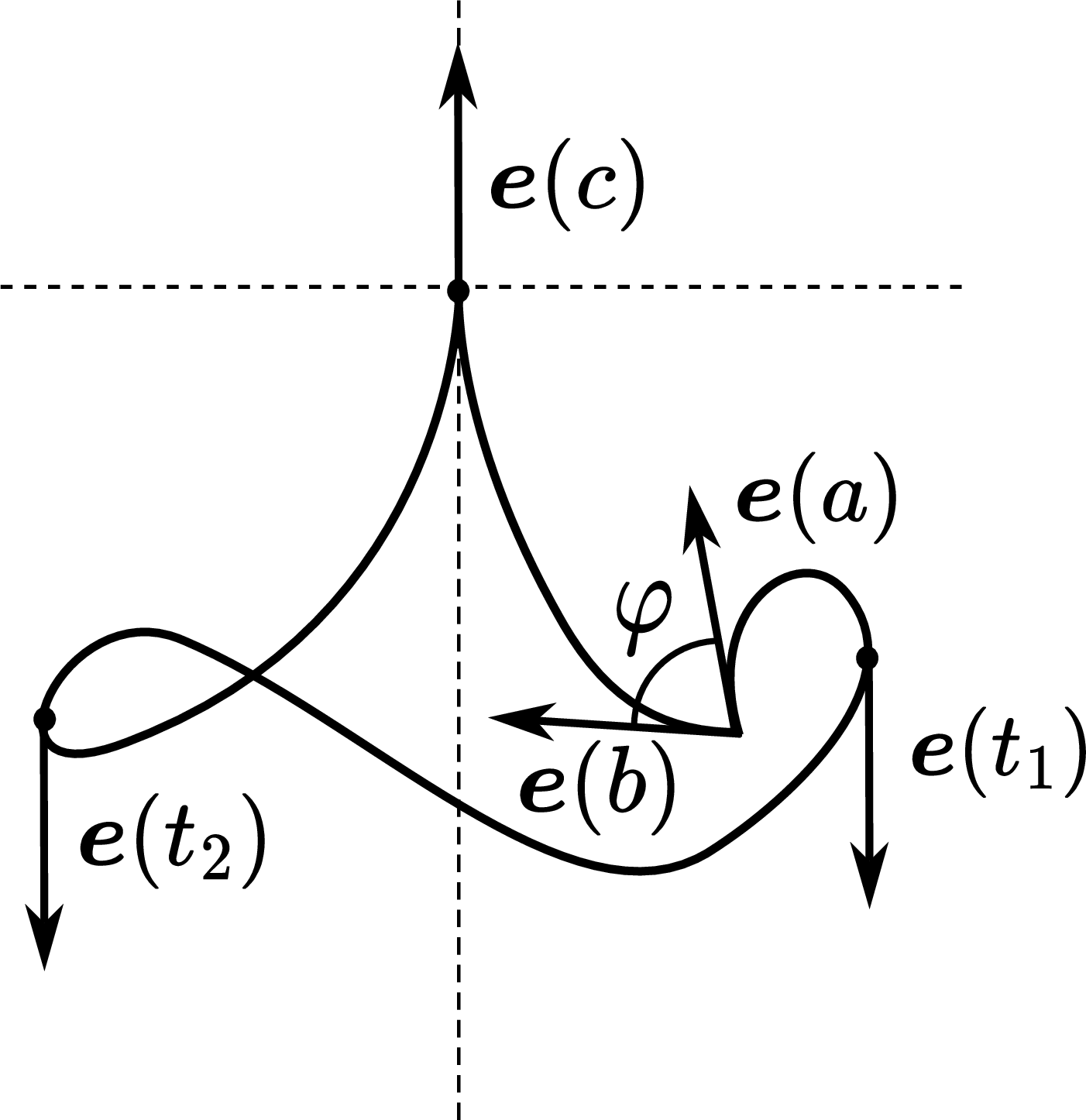}}} 
  \vspace{6mm}\\
  {\footnotesize Case (II-i)} &
  {\footnotesize Case (II-ii)} &
  {\footnotesize Case (II-iii)} &
  {\footnotesize Case (II-iv)} 
 \end{tabular}  
 \caption{
 Frontals having the same endpoints but with one singular point in the interior,
 the case (II).
 }
 \label{fig:caseII}
\end{figure}

\medskip
\noindent
\underline{\bf (III)~The case of $a=t_1<c<t_2<b$.}\quad
The total absolute curvature $K(\gamma)$ 
is written as
$
K(\gamma)=
K(\gamma|_{[a,c]})+
K(\gamma|_{[c,t_2]})+
K(\gamma|_{[t_2,b]}).
$
As in \eqref{eq:case-I-ep}, 
we have
$
K(\gamma|_{[c,t_2]})>0
$
and 
$K(\gamma|_{[a,c]})>0$.
As in the cases (I), (II),
since $x'(t_2)=0$,
we have $\vt(t_2) =\pm \vt(c)$.
Thus, either (III-i) or (III-ii) occurs,
cf.\ Figure \ref{fig:caseIII}.
\begin{itemize}
\item[(III-i)] If $\vt(t_2) =\vt(c)$,
$K(\gamma|_{[a,c]})\geq \varphi_a$ and
$K(\gamma|_{[t_2,b]})\geq \varphi_b$ hold.
By $K(\gamma|_{[c,t_2]})>0$,
we have
$
K(\gamma)>
K(\gamma|_{[a,c]})+
K(\gamma|_{[t_2,b]})
\geq
\varphi_a + \varphi_b
\geq \varphi.
$
\item[(III-ii)] If $\vt(t_2) = - \vt(c)$,
we have 
$K(\gamma|_{[c,t_2]})\geq \arccos (\vt(c)\cdot \vt(t_2))=\pi$.
Hence, by $K(\gamma|_{[a,c]})>0$, we have
$K(\gamma) > \pi \geq \varphi$.
\end{itemize}

\begin{figure}[htbp]
 \begin{tabular}{@{\hspace{-4mm}}c@{\hspace{4mm}}c@{\hspace{5mm}}c@{\hspace{4mm}}c}
  \mbox{\raisebox{1.4mm}{\includegraphics[width=30mm]{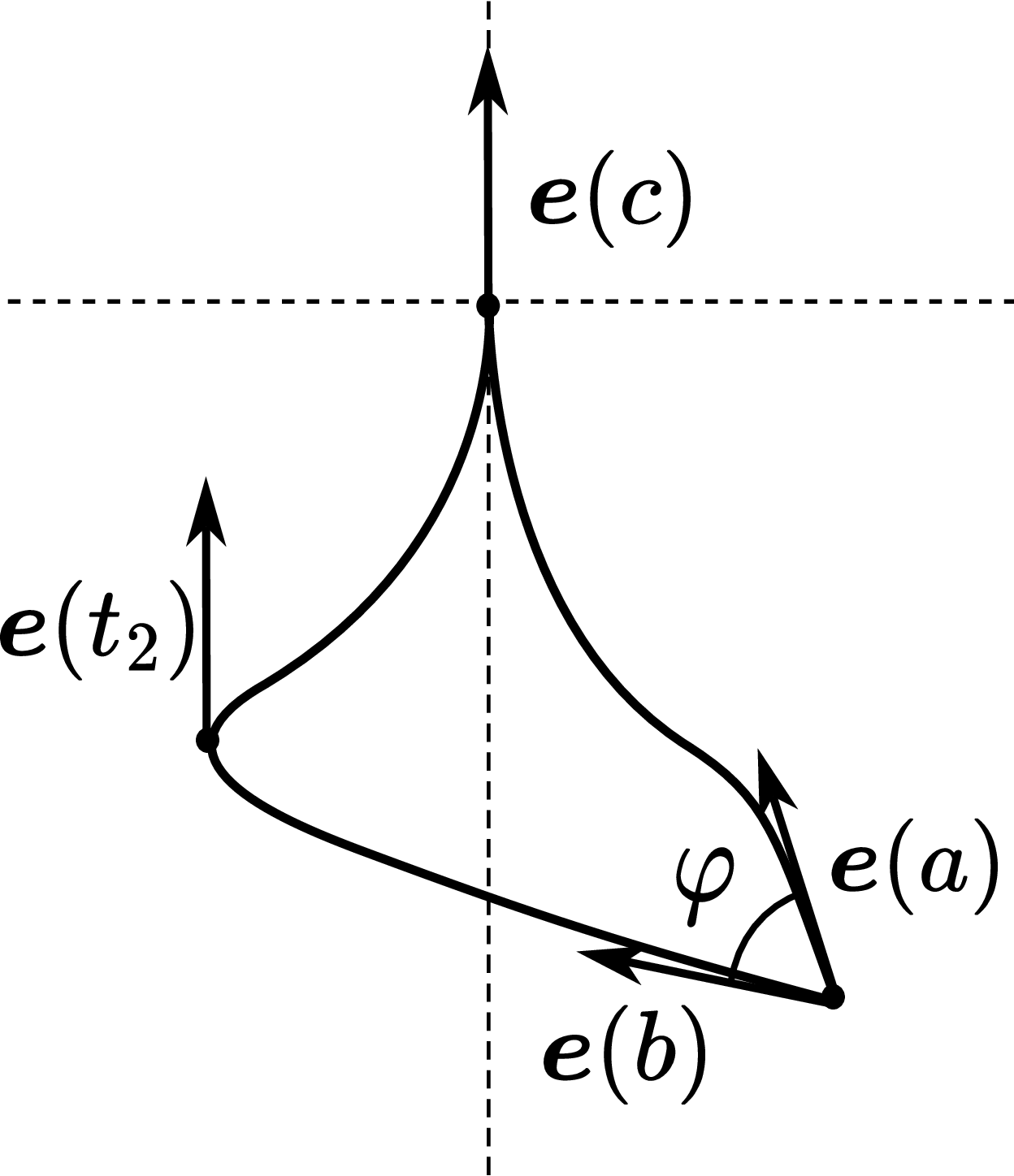}}} &
  \mbox{\raisebox{0.8mm}{\includegraphics[width=30mm]{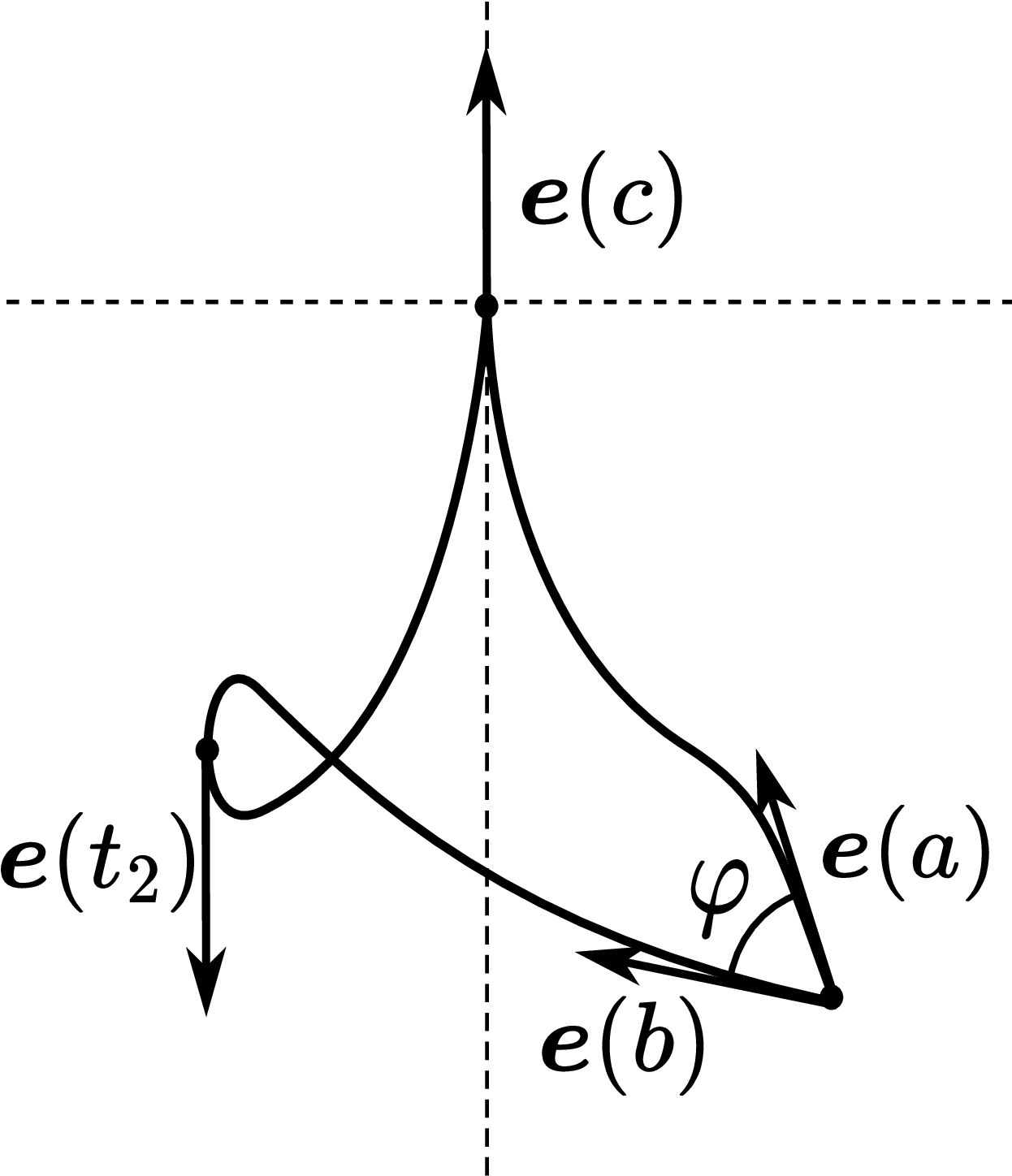}}} &
  \mbox{\raisebox{0mm}{\includegraphics[width=32mm]{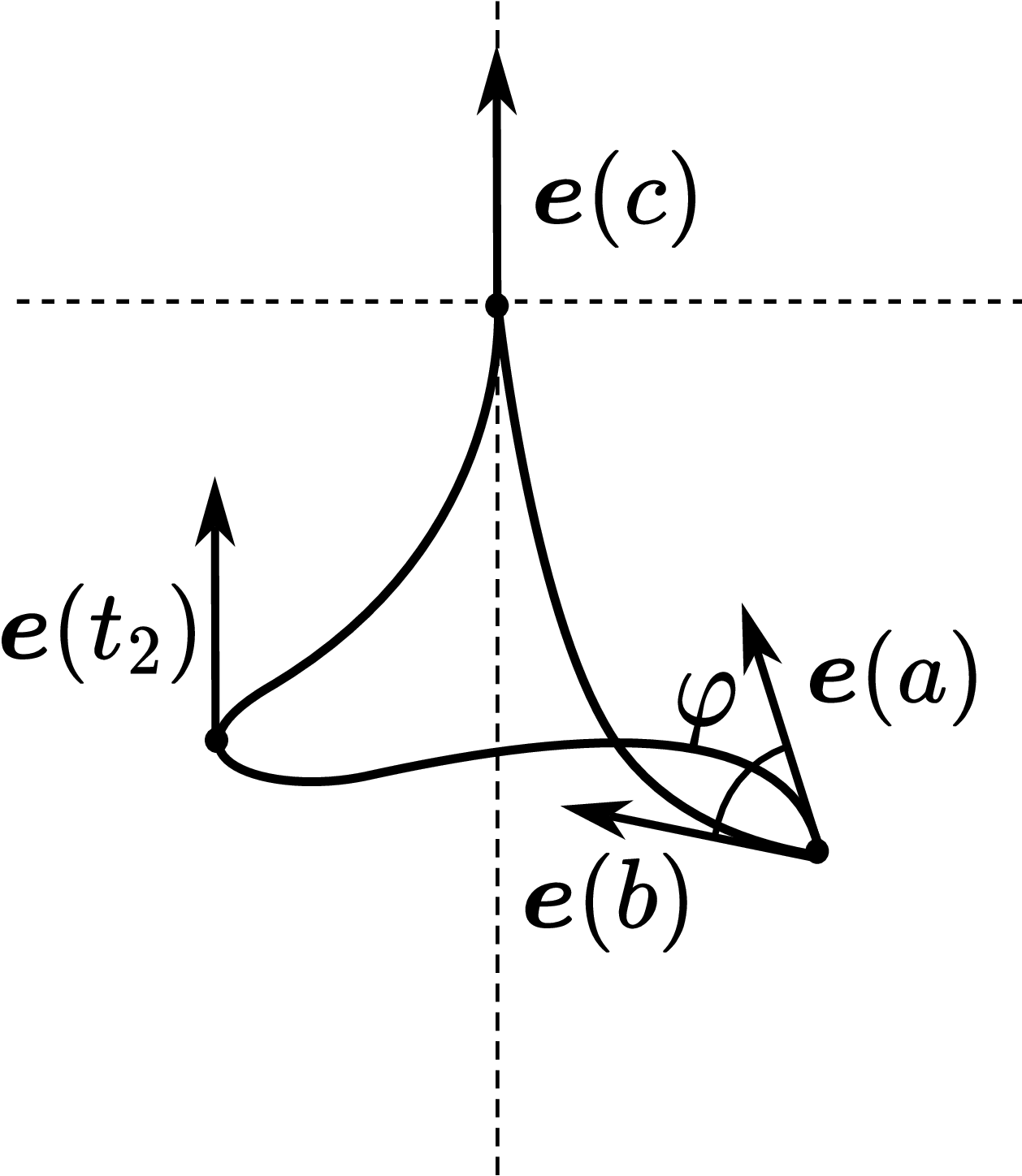}}} &
  \mbox{\raisebox{0.4mm}{\includegraphics[width=32mm]{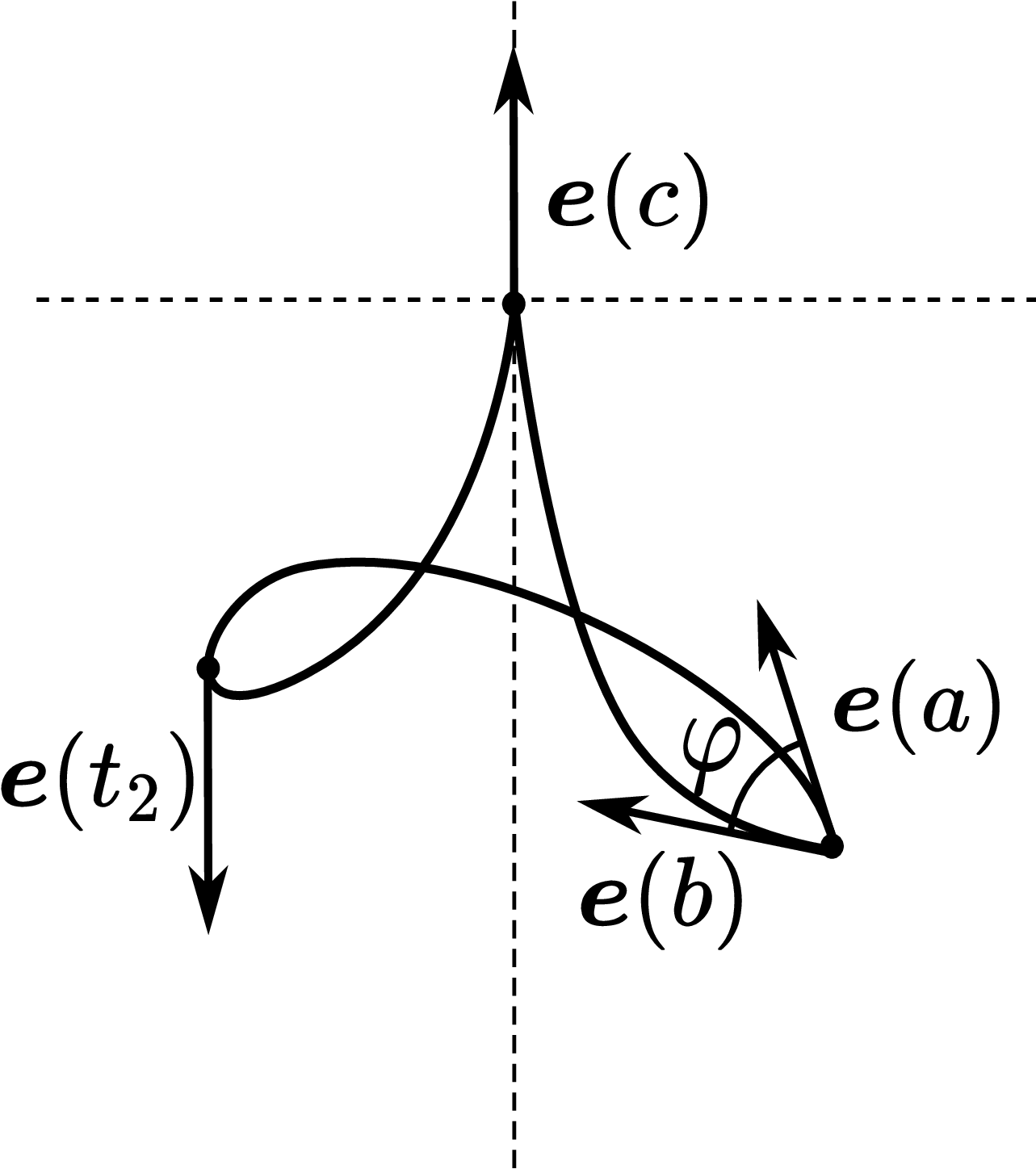}}} 
  \vspace{6mm}\\
  {\footnotesize Case (III-i)} &
  {\footnotesize Case (III-ii)} &
  {\footnotesize Case (IV-i)} &
  {\footnotesize Case (IV-ii)} 
 \end{tabular}  
 \caption{
 Frontals having the same endpoints but with one singular point in the interior,
 the cases (III) and (IV).
 }
 \label{fig:caseIII}
\end{figure} 

\medskip
\noindent
\underline{\bf (IV)~The case of $a=t_1<t_2<c<b$.}\quad
The total absolute curvature $K(\gamma)$ is written as
$
K(\gamma)=
K(\gamma|_{[a,t_2]})+
K(\gamma|_{[t_2,c]})+
K(\gamma|_{[c,b]}).
$
As in \eqref{eq:case-I-ep}, 
we have
$K(\gamma|_{[t_2,c]})>0$
and 
$K(\gamma|_{[c,b]})>0$.

As in the cases (I), (II), (III),
since $x'(t_2)=0$,
we have $\vt(t_2) =\pm \vt(c)$.
Thus, either (IV-i) or (IV-ii) occurs,
cf.\ Figure \ref{fig:caseIII}.
\begin{itemize}
\item[(IV-i)] If $\vt(t_2) =\vt(c)$,
$K(\gamma|_{[a,t_2]})\geq \varphi_a$ and
$K(\gamma|_{[c,b]})\geq \varphi_b$ hold.
By $K(\gamma|_{[t_2,c]})>0$,
we have
$
K(\gamma)>
K(\gamma|_{[a,t_2]})+
K(\gamma|_{[c,b]})
\geq
\varphi_a + \varphi_b
\geq \varphi.
$
\item[(IV-ii)] If $\vt(t_2) = - \vt(c)$,
Lemma \ref{lem:theta-integral} yields
$K(\gamma|_{[t_2,c]})\geq \arccos (\vt(t_2)\cdot \vt(c))=\pi$.
By $K(\gamma|_{[c,b]})>0$,
we have
$
K(\gamma)>
\pi
\geq \varphi.
$
\end{itemize}

\medskip
\noindent
\underline{\bf (V)~The case of $a<t_1<c<t_2=b$.}\quad
By an argument similar to the case (III),
we have $K(\gamma)>\varphi$,
cf.\ Figure \ref{fig:caseV}.

\medskip
\noindent
\underline{\bf (VI)~The case of $a<c<t_1<t_2=b$.}\quad
By an argument similar to the case (IV),
we have $K(\gamma)>\varphi$,
cf.\ Figure \ref{fig:caseV}.

\begin{figure}[htbp]
 \begin{tabular}{@{\hspace{-4mm}}c@{\hspace{4mm}}c@{\hspace{5mm}}c@{\hspace{4mm}}c}
  \mbox{\raisebox{1.4mm}{\includegraphics[width=30mm]{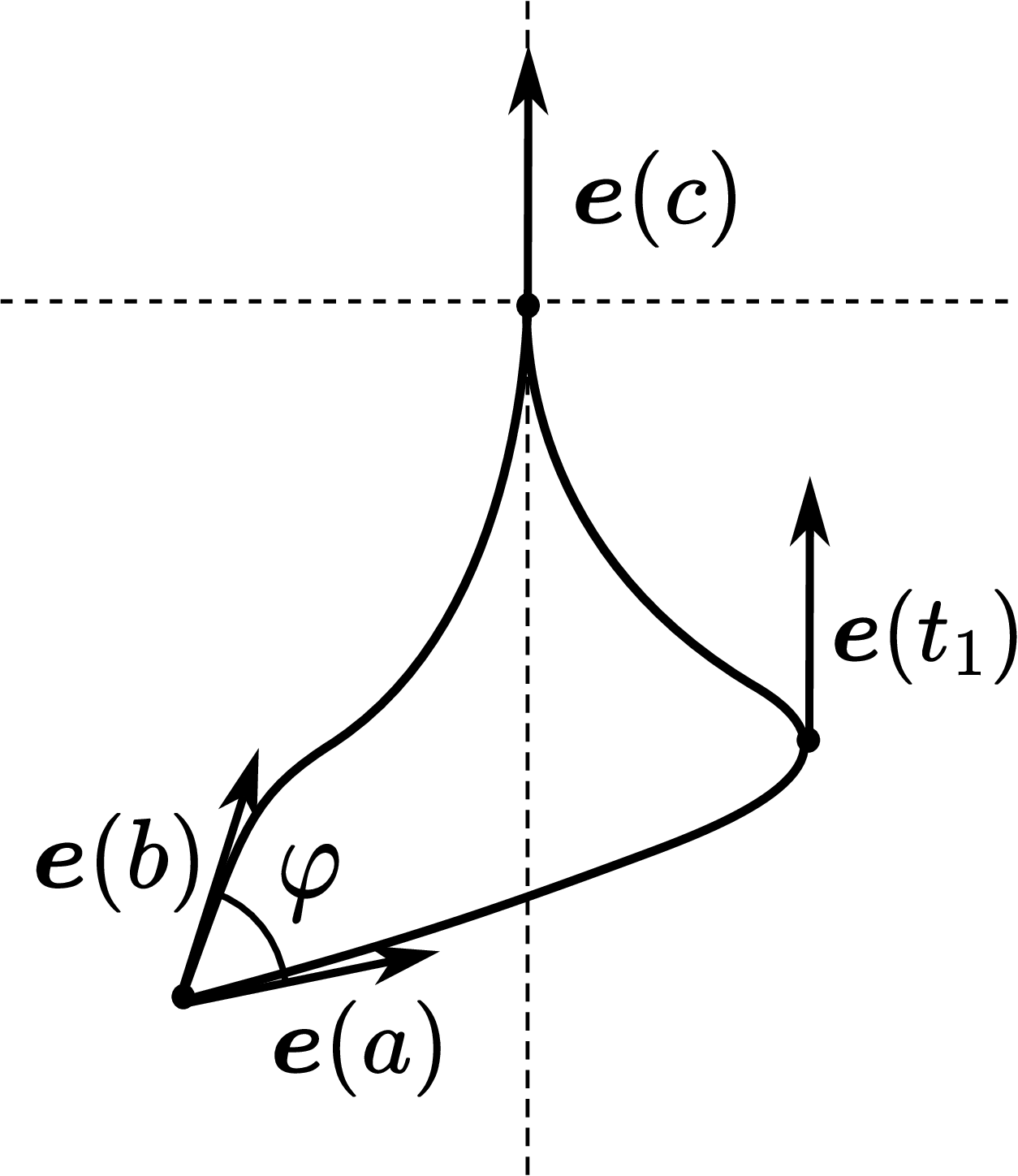}}} &
  \mbox{\raisebox{1.4mm}{\includegraphics[width=30mm]{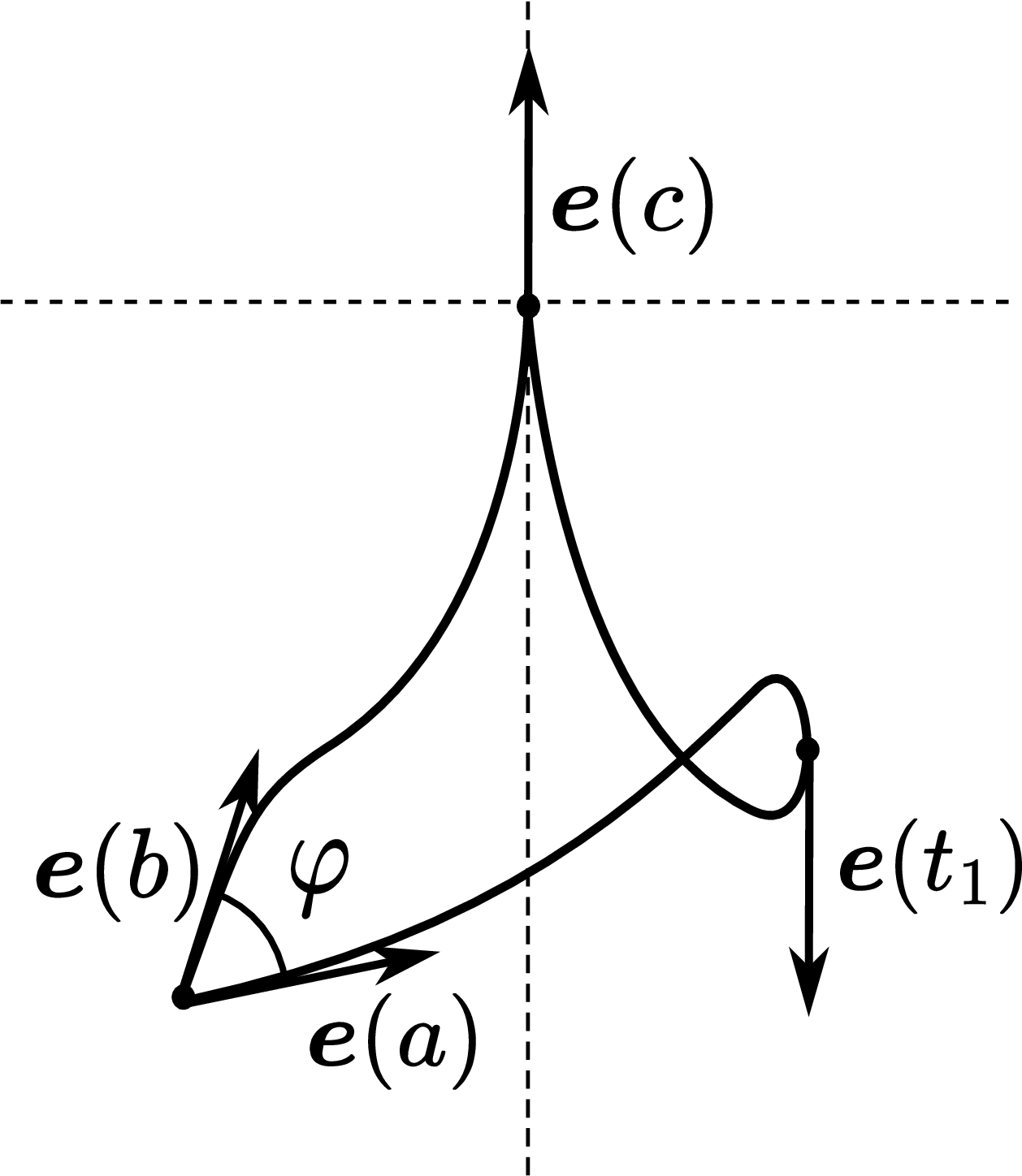}}} &
  \mbox{\raisebox{0mm}{\includegraphics[width=32mm]{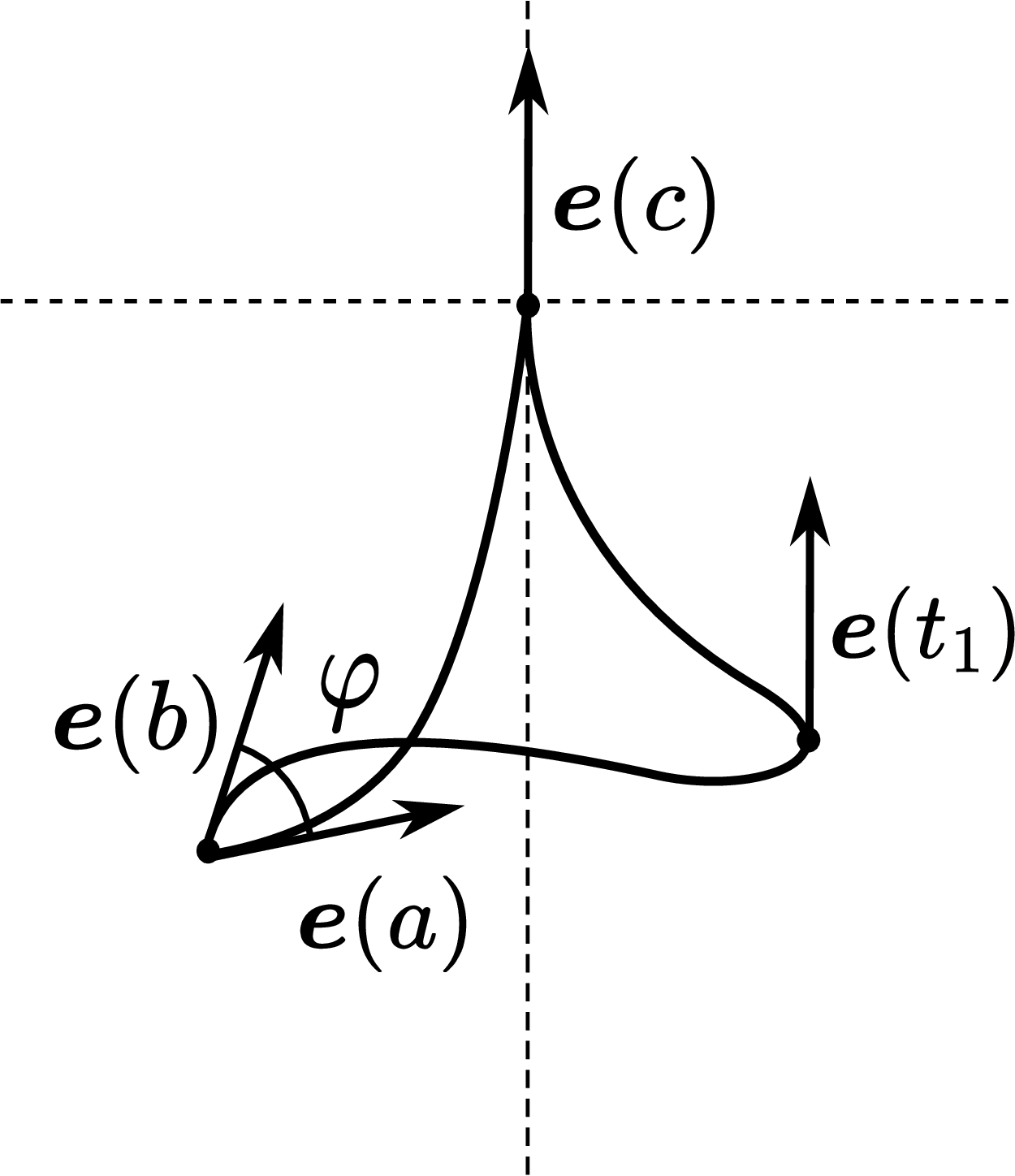}}} &
  \mbox{\raisebox{0.5mm}{\includegraphics[width=32mm]{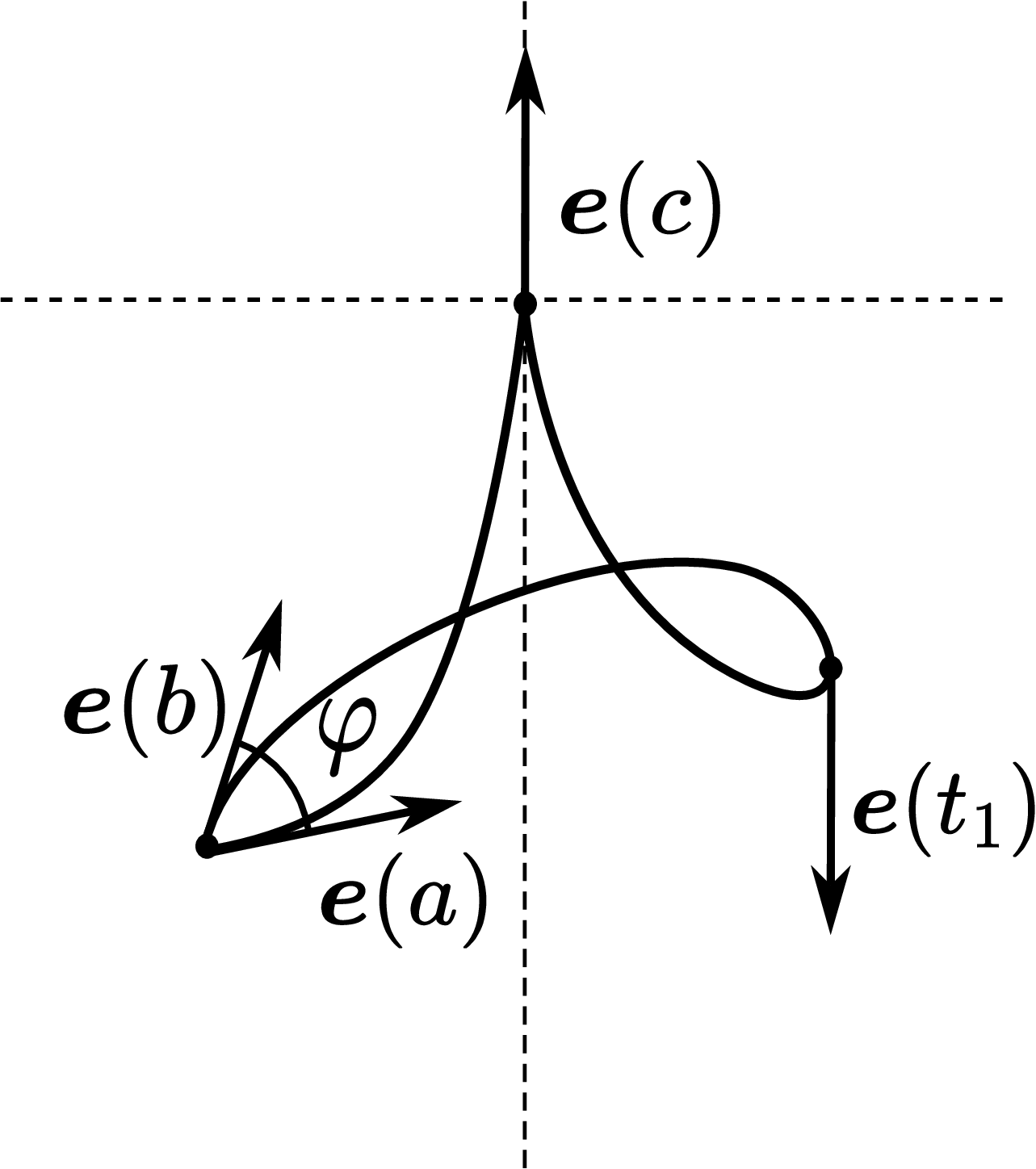}}} 
  \vspace{6mm}\\
  {\footnotesize Case (V-i)} &
  {\footnotesize Case (V-ii)} &
  {\footnotesize Case (VI-i)} &
  {\footnotesize Case (VI-ii)} 
 \end{tabular}  
 \caption{
 Frontals having the same endpoints but with one singular point in the interior,
 the cases (V) and (VI).
 }
 \label{fig:caseV}
\end{figure}

\medskip

By the cases (I)--(VI), we have $K(\gamma)>\varphi$.
\end{proof}

\subsection{Simpleness and the number of singular points}
\label{sec:proof-B}

Using the estimates of the total absolute curvature
in Fact \ref{fact:sing-0} and Propositions \ref{prop:cusp-2},  \ref{prop:cusp-1},
we prove the following.

\begin{theorem}\label{thm:main}
Let $\gamma : S^1 \to \R^2$ 
be a non-co-orientable closed front 
satisfying $K(\gamma)=\pi$.
If every singular point of $\gamma$ is cusp,
and the number of the singular points of $\ga$ is $3$, 
then $\ga$ is a simple closed curve.
\end{theorem}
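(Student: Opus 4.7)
The plan is to argue by contradiction, assuming $\gamma$ is not simple and deriving $K(\gamma) > \pi$, which contradicts the hypothesis. Since $\gamma$ is not simple, there exist distinct parameters $a, b \in [0, 2\pi)$ with $a < b$ and $\gamma(a) = \gamma(b)$. I will first assume that both $a$ and $b$ can be chosen to be regular points of $\gamma$; the delicate point of whether this is always possible is the main obstacle, addressed below. Split $\gamma$ into two arcs $\gamma_1 = \gamma|_{[a,b]}$ and $\gamma_2 = \gamma|_{[b, a+2\pi]}$, both of which are frontals with coincident endpoints. Let $j_1$ and $j_2$ be the numbers of cusps in the open intervals $(a,b)$ and $(b, a+2\pi)$, respectively; since $a, b$ are regular and $\gamma$ has exactly three cusps, $j_1 + j_2 = 3$.

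A key preliminary step is to identify the angle at endpoints. Since $a$ and $b$ are regular, $T = \gamma'/\|\gamma'\|$ is continuous there, and its $2\pi$-periodicity (inherited from $\gamma$) gives $T((a+2\pi)^-) = T(a^-) = T(a^+)$. Hence the angles at the endpoints of $\gamma_1$ and of $\gamma_2$ coincide with the single value $\varphi := \arccos\left(-T(a)\cdot T(b)\right) \in [0,\pi]$.

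Now perform case analysis on $(j_1, j_2)$. If $\{j_1, j_2\} = \{0,3\}$, say $j_1 = 0$, then $\gamma_1$ is a regular curve on $(a,b)$ with $\gamma_1(a) = \gamma_1(b)$, so Fact~\ref{fact:sing-0} yields $K(\gamma_1) > \pi$, and therefore $K(\gamma) \geq K(\gamma_1) > \pi$, a contradiction. If instead $\{j_1, j_2\} = \{1,2\}$, say $j_1 = 1$ and $j_2 = 2$, then Proposition~\ref{prop:cusp-1} gives $K(\gamma_1) > \varphi$ and Proposition~\ref{prop:cusp-2} gives $K(\gamma_2) \geq \pi - \varphi$, so
\[
K(\gamma) = K(\gamma_1) + K(\gamma_2) > \varphi + (\pi - \varphi) = \pi,
\]
again a contradiction. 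Both cases being impossible, $\gamma$ must be simple.

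The hardest part is legitimising the opening choice of a \emph{regular} self-intersection. For a generic non-simple front this is immediate, since self-crossings at the finitely many cusp images form a lower-dimensional phenomenon. In the pathological configurations — two of the three cusps sharing an image, or a cusp lying on the image of another branch of $\gamma$ — I would address the issue in one of two ways: either by a continuity/limit argument that extends Propositions~\ref{prop:cusp-1} and~\ref{prop:cusp-2} to endpoints that are cusps (using the fact that $T$ has one-sided limits differing by $\pi$ across a cusp, so the inner endpoint angle is still well defined), or by a topological ruling-out using the structural constraints established earlier — the curve is locally $L$-convex with monotone angle function $\theta$ of total variation $\pi$ and exactly three $T$-reversals at the cusps — which strongly restricts which coincidences of cusp images are possible. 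Either path converts a singular self-intersection into a regular one in an arbitrarily close configuration, allowing the same contradiction.
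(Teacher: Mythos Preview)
Your approach is essentially the paper's: contradiction, split at a self-intersection into two arcs, and combine Fact~\ref{fact:sing-0}, Proposition~\ref{prop:cusp-1}, and Proposition~\ref{prop:cusp-2} according to how the three cusps are distributed. For regular self-intersection points your argument is complete and matches the paper.

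The one genuine gap is your treatment of the case where $a$ or $b$ is a cusp, which you flag but do not resolve. No limiting argument or topological ruling-out is needed here, and no extension of Propositions~\ref{prop:cusp-1} or~\ref{prop:cusp-2} is required: those propositions are already stated for frontals on $[a,b]$ with a prescribed number of cusps in the \emph{open} interval $(a,b)$, with the endpoint angle defined via one-sided limits of $T$. So they apply verbatim when an endpoint is a cusp. The paper exploits this directly (its case~(3)): if, say, $a$ coincides with a cusp and $b$ is regular with one cusp between them, then the shared cusp at $a$ lies on the boundary of both arcs and is not counted as interior for either. Hence \emph{each} arc has exactly one interior cusp, and Proposition~\ref{prop:cusp-1} applies to both. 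The sign flip $T((a+2\pi)^-)=-T(a^+)$ across the cusp makes the two endpoint angles complementary, $\varphi$ and $\pi-\varphi$, so
\[
K(\gamma)=K(\gamma_1)+K(\gamma_2)>\varphi+(\pi-\varphi)=\pi.
\]
This is precisely the mechanism you gestured at (``$T$ has one-sided limits differing by $\pi$ across a cusp''); carrying it through finishes the proof without any approximation.
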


\begin{proof}
Suppose $t=0,c_1,c_2$ $(0<c_1<c_2<2\pi)$ are cusps.
We prove that $\gamma : [0,2\pi) \to \R^2$ is injective by contradiction.
So, we assume that there exists $a,b \in [0,2\pi)$ $(a<b)$ satisfies 
$\gamma(a)=\gamma(b).$
By a translation of $t$, if necessary, we may 
suppose that $0\le a< c_1$.
Either one of the cases (1), (2) or (3) occurs,
cf.\ Figure \ref{fig:proof-main}.

\begin{itemize}
\setlength{\leftskip}{-4mm}
\item[(1)]
In the case of $a< b\leq c_1$, 
since the restriction $\gamma|_{[a,b]}$ satisfies 
the assumption of Fact \ref{fact:sing-0}, we have 
$K(\gamma)\geq \int_a^b |\kappa|\, ds>\pi.$
Similarly, in the cases of 
$a=0$, $c_2< b\leq 2\pi$, 
or $0< a< c_1$, $c_2< b\leq 2\pi$, 
the restriction $\gamma|_{[b,2\pi+a]}$ satisfies 
the assumption of Fact \ref{fact:sing-0},
and hence, we have $K(\gamma)>\pi.$
See Figure \ref{fig:proof-main} (1).
\item[(2)]
In the case of $0<a<c_1$ and $c_1<b<c_2$,
We set $\gamma_1=\gamma|_{[a,b]}$ and
$\gamma_2=\gamma|_{[b,2\pi +a]}$.
We let $\varphi\in[0,\pi]$ be the angle at the endpoints of $\gamma_1$.
By Proposition \ref{prop:cusp-1}, we have $K(\gamma_1)>\varphi$.
On the other hand, the angle at the endpoints of $\gamma_2$
is written as $\varphi.$
Thus, by Proposition \ref{prop:cusp-2}, 
we have $K(\gamma_2)\geq\pi-\varphi$.
Hence, we obtain $K(\gamma)=K(\gamma_1)+K(\gamma_2)>\pi$.
The case of $0<a<c_1$ and $c_2<b<2\pi$
is proved in the same way.
See Figure \ref{fig:proof-main} (2).
\item[(3)]
In the case of $a=0$ and $c_1<b<c_2$,
we set $\gamma_1=\gamma|_{[a,b]}$ and
$\gamma_2=\gamma|_{[b,2\pi]}$.
We let $\varphi\in[0,\pi]$ be the angle at the endpoints of $\gamma_1$.
Then $\varphi=\arccos(-T(a)\cdot T(b))$.
By Proposition \ref{prop:cusp-1}, we have $K(\gamma_1)>\varphi$.
On the other hand, the angle at the endpoints of $\gamma_2$
is written as
\begin{align*}
\arccos(-T(b)\cdot T(2\pi))
=\arccos(T(a)\cdot T(b))
=\pi-\varphi.
\end{align*}
Thus, by Proposition \ref{prop:cusp-1}, 
we have $K(\gamma_2)>\pi-\varphi$.
Hence, we obtain $K(\gamma)=K(\gamma_1)+K(\gamma_2)>\pi$.
The case of $0<a<c_1$ and $b=c_2$
is proved in the same way,
by setting 
$\gamma_1=\gamma|_{[a,b]}$,
$\gamma_2=\gamma|_{[b,2\pi+a]}$.
See Figure \ref{fig:proof-main} (3).
\end{itemize}

\begin{figure}[htbp]
\centering
 \begin{tabular}{c@{\hspace{10mm}}c@{\hspace{14mm}}c}
  \resizebox{3.3cm}{!}{\includegraphics{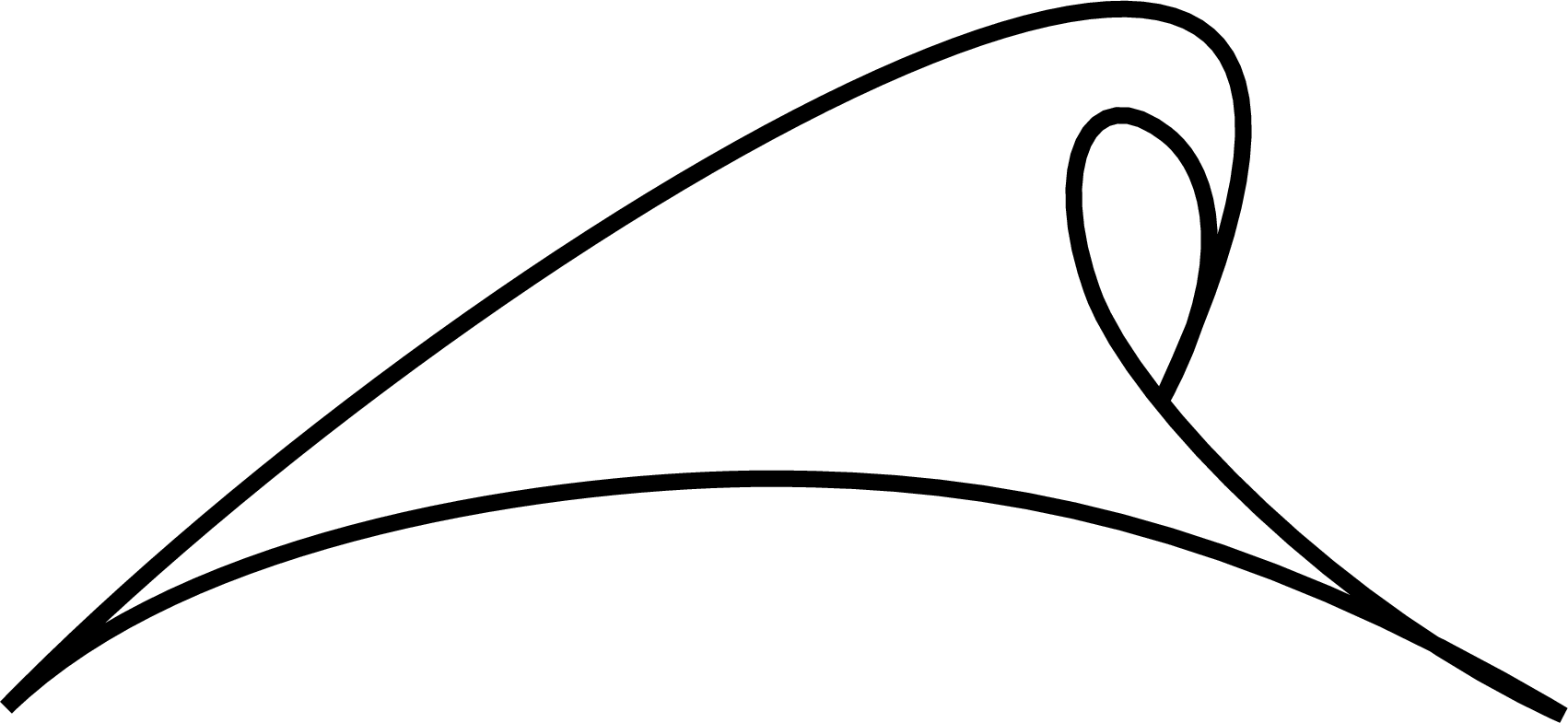}} &
  \resizebox{3.3cm}{!}{\includegraphics{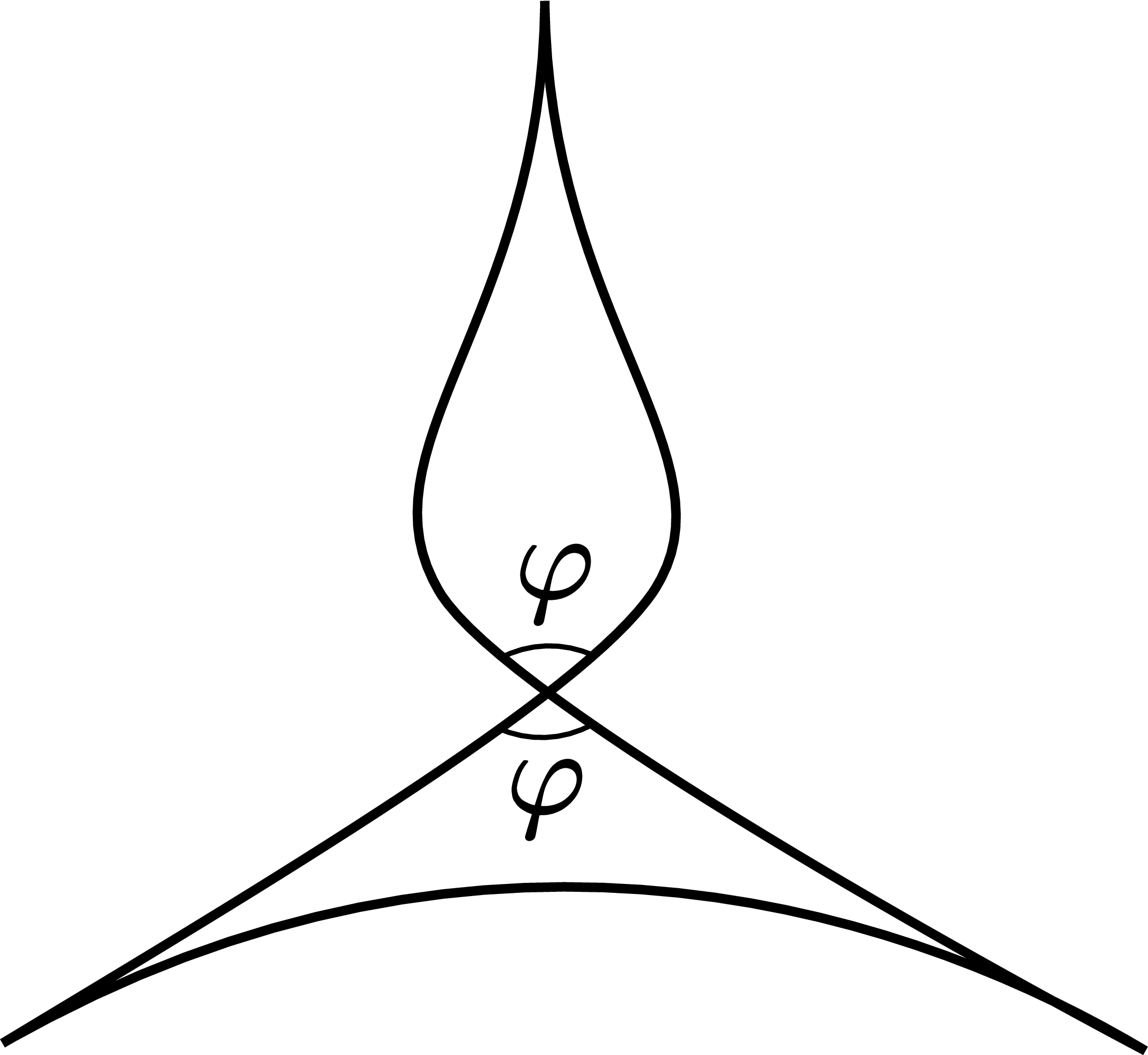}} &
  \resizebox{3.75cm}{!}{\includegraphics{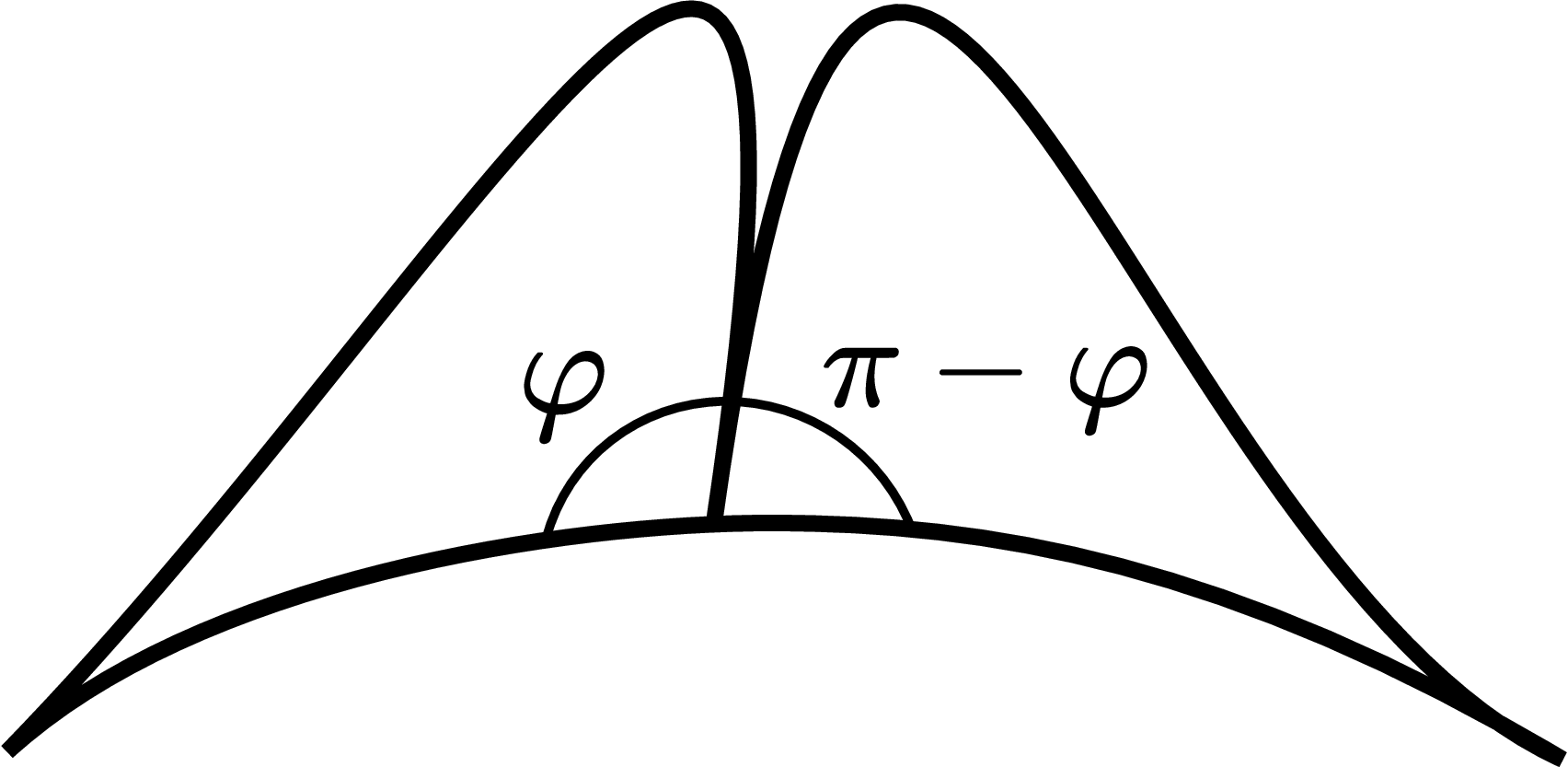}}\\
  {\footnotesize (1)} &
  {\footnotesize (2)} &
  {\footnotesize \hspace{-10mm} (3)} 
 \end{tabular}  
 \caption{
 Possible curve shapes when having self-intersections.
 }
 \label{fig:proof-main}
\end{figure} 

\medskip
Therefore, by the cases (1), (2) and (3), 
we have $K(\gamma)>\pi$, a contradiction.
\end{proof}

\begin{proof}[Proof of Theorem \ref{thm:intro3}]
Let $\gamma : S^1\to \R^2$ be a
non-co-orientable closed front
with $K(\gamma)=\pi$.
Suppose that every singular point is a cusp.
By the non-co-orientability of $\gamma$,
the number $\M$ of cusps is 
an odd integer (for example, see \cite{SUY-book}).
If $\M=1$, then Fact \ref{fact:sing-0}
yields that $K(\gamma)>\pi$,
a contradiction.
Hence, we have $\M\geq3$.

By Theorem \ref{thm:main},
it suffices to prove that
if $\gamma : S^1\to \R^2$ is simple then $\M=3$.
Denote by $\Omega$ the interior domain of $\gamma$.
Changing the orientation of $\gamma(t)$, 
if necessary, we may assume that
the left-hand side of $\gamma(t)$ is the interior domain $\Omega$.
Let $c_j\in [0,2\pi)$ $(j=1,\dots, \M)$ be the singular points
of $\gamma(t)$,
and $\angle C_j$ 
be the interior angle of $\gamma(t)$
at $C_j=\gamma(c_j)$ for $j=1,\dots, \M$.
Then, by the Gauss-Bonnet formula,
we have
\begin{equation}\label{eq:GB}
 \sum_{j=1}^{\M}\angle C_j  
 = (\M-2)\pi + \int_{\partial \Omega} \kappa\,ds.
\end{equation}
Since every singular point of $\gamma(t)$ 
is cusp, we have
$\angle C_j=0$ for $j=1,\dots, \M$.
Moreover, as $\gamma(t)$ is locally $L$-convex,
we have $\tilde{\kappa}(t)\geq 0$ or 
$\tilde{\kappa}(t)\leq 0$.
Thus, we may set $\tilde{\kappa}(t)=\sigma|\tilde{\kappa}(t)|$
where $\sigma\in \{+,-\}$.
Then
$$
 \int_{\partial \Omega} \kappa\,ds
 = \int_{S^1} \tilde{\kappa}\,dt
 =\sigma \int_{S^1} |\tilde{\kappa}|\,dt
 =\sigma \pi
$$
holds.
Then,
\eqref{eq:GB} yields that 
$0= (\M-2)\pi + \sigma\pi.$
If $\sigma=+$, we have $\M=1$.
By Fact \ref{fact:sing-0},
$K(\gamma)>\pi$, a contradiction.
Hence, we have $\sigma=-$,
and $\M=3$ holds.
\end{proof}

\section{Examples}
\label{sec:examples}

If a frontal $\gamma$ admits a cusp singularity,
then $\kappa$ is unbounded,
and hence, $K(\gamma)>0$ holds.
However, there are examples of a closed front having cusps 
whose total absolute curvature $K(\gamma)$ 
is less than a given positive number.

\begin{example}[Eye-shaped closed frontal]
\label{ex:megata}
For a positive number $a$, we set 
\begin{equation}\label{eq:mgt}
\gamma_a(t)=\frac{1}{5-3\cos2t}
\left(3\cos t-\cos3t,~4a\sin^3 t\right).
\end{equation}
Then, $\gamma_a(t)$ is a $2\pi$-periodic front with 
unit tangent vector field
$$
\vt(t)
=\frac{1}{\sqrt{G(t,a)}}
\left(-\left(2 \sin ^4 t+\sin ^2t+1\right),a \sin 2t \left(\sin ^2t+1\right)\right),
$$
where we set $H(t)=4\sin^8 t+4\sin^6 t+5\sin^4 t+2\sin^2 t+1$
and 
$
  G(t,a)=H(t)+4a^2\sin^2 t(-\sin^6 t-\sin^4 t+\sin^2 t+1).
$
We remark that $G(t,a)>0$ for each $a>0$ and $t\in \R$.
Since $\vt(t+2\pi)=\vt(t)$ holds for each $t$, 
the front $\gamma_a$ is co-orientable.
Every singular point is given by $t=m\pi$ for an integer $m$.
Since $\gamma_a'(m\pi)=0$ and 
$\det(\gamma_a''(m\pi),\gamma_a'''(m\pi))=-36a$,
every singular point is cusp.

For a given positive number $\varepsilon$,
we set $a=\varepsilon/M$, where
$M=\int^{2\pi}_0{\frac{F(t)}{H(t)}dt}\,(>0)$.
We also set $F(t)=|3\cos2t-1|(5-3\cos2t)$.
Since $G(t,a)\ge H(t)$,
the total absolute curvature $K(\gamma_a)$ satisfies
$$
K(\gamma_a)
=\frac{a}{2} \int^{2\pi}_0 \frac{F(t)}{G(t,a)}\, dt
< \frac{a}{2}\int^{2\pi}_{0} \frac{F(t)}{H(t)}\,dt
<\varepsilon.
$$
Therefore,
$\gamma_a$ is a closed co-orientable front 
whose singular set consists of cusps,
and its total absolute curvature $K(\gamma_a)$ 
is less than a given positive number.
\end{example}

\begin{example}[Hypocycloid]
\label{ex:hypo} 
Let $m$ be a positive integer.
We set 
$\gamma : \R\to \R^2$ as
$$
\gamma(t)
=\left(
m \cos (m+1)t+(m+1)\cos mt,~m \sin (m+1)t-(m+1)\sin mt
\right).
$$
For each singular point $t=c$,
it holds that $\gamma'(c)=0$
and $\det(\gamma''(c),\gamma'''(c))\ne0$.
Every singular point is a cusp.
Hence, $\gamma$ is a closed front with period $2\pi$.
%
Since $\vt(t)=(\cos (t/2),~\sin (t/2))$ gives
a unit tangent vector field
satisfying $\vt(t+2\pi)=-\vt(t)$,
$\gamma$ is non-co-orientable. 
Then the total absolute curvature $K(\gamma)$ is given by
$$
K(\gamma)
=\int^{2\pi}_0|\kappa(t)|\|\gamma'(t)\|\,dt
=\pi.
$$
For the figure of these hypocycloids,
see Figure \ref{fig:curve}.
Figure \ref{fig:curve} (a), (b) and (c) 
are the figures of the hypocycloids of 
$m=1, 2$ and $3$,
respectively.
In these cases, the total absolute curvature $K(\gamma)$ is $\pi$.
\end{example}

\begin{acknowledgements}
The authors 
would like to thank 
Masaaki Umehara 
for helpful comments.
\end{acknowledgements}


\end{document}